\documentclass[11pt]{amsart}
\usepackage{amsfonts, amssymb, latexsym}
\usepackage{tikz, multicol,parcolumns, xcolor}
\usepackage{hyperref}
\usepackage{float}
\usepackage{wrapfig}
\usetikzlibrary{calc,arrows}

\restylefloat{figure}

\setlength{\oddsidemargin}{0in}
\setlength{\evensidemargin}{0in}
\setlength{\marginparwidth}{0in}
\setlength{\marginparsep}{0in}
\setlength{\marginparpush}{0in}
\setlength{\topmargin}{0in}
\setlength{\headheight}{0pt}
\setlength{\headsep}{0pt}
\setlength{\footskip}{.3in}
\setlength{\textheight}{9.2in}
\setlength{\textwidth}{6.5in}
\setlength{\parskip}{4pt}

\newtheorem{theorem}{Theorem}[section]
\newtheorem{proposition}[theorem]{Proposition}
\newtheorem{lemma}[theorem]{Lemma}

\newtheorem{claim}[theorem]{Claim}
\newtheorem*{claim*}{Claim}
\newtheorem{corollary}[theorem]{Corollary}

\newtheorem{Main Conjecture}[theorem]{Main Conjecture}

\newtheorem{definition}[theorem]{Definition}
\theoremstyle{remark}

\newtheorem{example}[theorem]{Example}

\newtheorem{remark}[theorem]{Remark}
\theoremstyle{plain}


\newcommand{\bem}{\mathcal{BEM}^{Y_0,Q}}

\newcommand{\lra}[1]{\mathrel{\mathop{\longrightarrow}^{\mathrm{#1}}}} 
\newcommand{\lmp}[1]{\mathrel{\mathop{\longmapsto}^{\mathrm{#1}}}} 

\newcommand{\caS}{\mathcal{S}}

\newcommand{\frt}{\mathfrak{t}}

\newcommand{\field}{\mathbb}
\newcommand{\C}{{\field C}}
\newcommand{\R}{{\field R}}
\newcommand{\Z}{{\field Z}}
\newcommand{\N}{{\field N}}
\newcommand{\Q}{{\field Q}}


\newcommand{\e}{\epsilon}
\newcommand{\ga}{\alpha}
\newcommand{\gb}{\beta}

\hyphenation{tab-leau tab-leaux ge-nome ge-nomes}

\newcommand{\excise}[1]{}


\begin{document}
\pagestyle{plain}

\mbox{}
\title{${\sf K}$-orbit closures and Barbasch-Evens-Magyar varieties}
\author{Laura Escobar}
\address{Laura Escobar \\ Department of Mathematics and Statistics \\ Washington University in St.~Louis \\ St.~Louis, MO 63130 \\ USA}
\email{laurae@wustl.edu}

\author{Benjamin J. Wyser}
\address{Benjamin J. Wyser}
\email{ben.wyser@gmail.com}

\author{Alexander Yong}
\address{Alexander Yong \\ Department of Mathematics \\ University of Illinois at Urbana--Champaign \\ Urbana, IL 61801 \\ USA}
\email{ayong@uiuc.edu}
\date{May 16, 2022}

\maketitle

\begin{abstract}
We define the \emph{Barbasch-Evens-Magyar varieties}. We show they are 
isomorphic to the smooth varieties defined in [D.~Barbasch-S.~Evens '94] that 
map generically finitely to symmetric orbit closures, thereby giving resolutions of singularities in certain cases. Our definition parallels [P.~Magyar '98]'s construction of the \emph{Bott-Samelson varieties} [H.~C.~Hansen '73, M.~Demazure '74]. From this alternative viewpoint, one deduces a
graphical description in type $A$, stratification into closed subvarieties of the same kind, and determination of the torus-fixed points. 
Moreover, we explain how these manifolds inherit a natural symplectic 
structure with Hamiltonian torus action. We then express
the moment polytope in terms of the moment polytope of a Bott-Samelson variety. 
\end{abstract}

\tableofcontents

\section{Introduction}

Let $X$ be a generalized flag variety of the form ${\sf G}/{\sf B}$,
where ${\sf G}$ is a connected reductive complex algebraic group and ${\sf B}$ is a Borel subgroup of ${\sf G}$. 
The left action of ${\sf B}$ on $X$ has finitely many orbits ${\sf B}w{\sf B}/{\sf B}$, where $w$ is a Weyl group element. The {\bf Schubert variety} $X_w$ is the closure ${\overline{{\sf B}w{\sf B}/{\sf B}}}$ of the ${\sf B}$-orbit. The study of Schubert variety singularities is of interest; see, e.g., \cite{BilleyLakshmibai, Brionnotes, BilleyJapan} and the references therein. 

In the 1970s,
H.C.~Hansen \cite{Hansen} and M.~Demazure
\cite{Demazure} constructed a \emph{Bott-Samelson variety} 
$BS^{Q}$ for each reduced word $Q$ of $w$, building on ideas
of R.~Bott-H.~Samelson \cite{BottSamelson}. These manifolds are 
resolutions of singularities of $X_w$. In recent years, Bott-Samelson varieties have been used, e.g., in studies of Schubert calculus
(M.~Willems \cite{Willems}), Kazhdan-Lusztig polynomials (B.~Jones-A.~Woo \cite{Jones-Woo}), Standard Monomial Theory
(V.~Lakshmibai-P.~Littelmann-P.~Magyar \cite{LakLitMag}), Newton-Okounkov bodies (M.~Harada-J.~Yang \cite{Harada-Yang}),
and matroids over valuation rings (A.~Fink-L.~Moci \cite{FinkMoci}).

In 1983, A.~Zelevinsky \cite{Zelevinsky} gave a different resolution for Grassmannian Schubert varieties, presented as
configuration spaces of vector spaces prescribed by dimension and containment conditions.
In 1998, P.~Magyar \cite{Magyar} gave a new description of  $BS^Q$ in the same spirit, replacing the quotient by group action definition with an iterated fiber product. 

Similar constructions have been used subsequently in, e.g.,
\begin{enumerate} 
\item[(I)] P.~Polo's proof that every polynomial $f\in 1+q{\mathbb Z}_{\geq 0}[q]$ is a Kazhdan-Lusztig polynomial (in type $A$) \cite{Polo};
\item[(II)] A.~Cortez's proof of the singular locus theorem for Schubert varieties in type $A$ \cite{Cortez} (cf.~\cite{Manivelsing, BilleyWarrington, Kassel}); 
\item[(III)] N.~Perrin's extension of Zelevinsky's resolution to minuscule
Schubert varieties \cite{Perrin1} (one application is \cite{Perrin2});
\item[(IV)] A.~Woo's classification of ``short'' Kazhdan-Lusztig polynomials \cite{Wooshort}; 
\item[(V)] the definition of the \emph{brick variety}, which provides resolutions of singularities of Richardson varieties \cite{Escobar}; and  
\item[(VI)] the connection \cite{EPTY-16+} of Magyar's definition to S.~Elnitsky's rhombic tilings \cite{Elnitsky}. 
\end{enumerate}

We are interested in the parallel story where orbit closures for symmetric subgroups
replace Schubert varieties.
A {\bf symmetric subgroup} ${\sf K}$ of ${\sf G}$ is a group comprised of the fixed points ${\sf G}^\theta$ of an involution $\theta$ of ${\sf G}$.  Like ${\sf B}$, ${\sf K}$ is {\bf spherical}, meaning that it has finitely many orbits ${\mathcal O}$ under the left action on $X$. The study of the singularities of a ${\sf K}$-orbit closure $Y={\overline{\mathcal O}}$ is relevant to the theory of
Kazhdan-Lusztig-Vogan polynomials and Harish-Chandra modules for a certain real Lie group ${\sf G}_{\R}$. This may be compared
with the connection of Schubert varieties to Kazhdan-Lusztig polynomials and the representation theory of complex semisimple Lie algebras. 

In 1994, D.~Barbasch-S.~Evens \cite{Barbasch-Evens-94} constructed a smooth variety, using a quotient description that extends the one for Bott-Samelsons from \cite{Demazure,Hansen}.  This variety comes equipped with a natural map to a particular ${\sf K}^0$-orbit closure, where ${\sf K}^0$ is the connected component of $1$.
In certain situations, this map provides a resolution of singularities of the orbit closure in question.

This paper introduces and initiates our study of the
\emph{Barbasch-Evens-Magyar variety} (BEM variety).
 Just as  P.~Magyar
\cite{Magyar} describes, \emph{via} a fiber product, a variety that is equivariantly isomorphic to a Bott-Samelson variety, the BEM variety reconstructs the manifold of \cite{Barbasch-Evens-94} (Theorem~\ref{thm:main}(I)). 

Our definition gives \emph{general type} results
about the varieties of  \cite{Barbasch-Evens-94}:
\begin{itemize}
\item a stratification (in the sense of 
\cite[Section~1.1.2]{Knutson}) into smaller BEM varieties (Corollary~\ref{cor:stratified});
\item description of its torus fixed points (Proposition~\ref{prop:thefixed});  
\item a symplectic structure with Hamiltonian torus action as well as analysis of the moment map image, e.g. we show that it is the convex hull of certain Weyl group reflections of the moment polytope of a Bott-Samelson variety (Theorem~\ref{thm:moment}); and
\item an analogue of the brick variety (Theorem~\ref{thm:main}(II)).
\end{itemize}
In type $A$ we give a diagrammatic description of the BEM varieties (Section~\ref{sec:A-BEMs}) in linear algebraic terms, avoiding the
algebraic group generalities. For example, we obtain more specific results (Section~\ref{sec:furtherPQ})
for ${\sf K}={\sf GL}_p\times {\sf GL}_q$ acting on
${\sf GL}_{p+q}/{\sf B}$. We show (Theorem~\ref{prop:theone}) that the study of BEM polytopes can be reduced to a special case. 
We determine the torus weights for this special case (Theorem~\ref{prop:tweights}) which permits us to partially
understand the vertices (Corollary~\ref{cor:vertexpartial}). We also give a combinatorial characterization of the dimension of the BEM polytope (Theorem~\ref{thm:dimaug2}).

We anticipate that many uses of the
Zelevinsky/Magyar-type construction of the Bott-Samelson variety, such as 
(I)--(VI) above, have BEM versions. In particular, an analogue of (II), even in the case of ${\sf K}={\sf GL}_p\times {\sf GL}_q$, would bring important new
information about the singularities of the symmetric orbit closures. More generally, analogues of (I)--(IV)
would illuminate the combinatorics of the celebrated Kazhdan-Lusztig-Vogan polynomials.

\section{Background on ${\sf K}$-orbits}\label{sec:background}

In this section we describe the background in general. 
See Section~\ref{sec:A-BEMs} for background on $\sf K$-orbits of type A.

Let ${\sf G}$ be a connected complex reductive algebraic group and ${\sf B}$ a Borel subgroup of ${\sf G}$ containing
a maximal torus ${\sf T}$. 
Furthermore we assume $\theta$ is an involution  of ${\sf G}$ and that ${\sf B}$ and ${\sf T}$ are $\theta$-stable.
We denote by ${\sf K}$ the symmetric subgroup ${\sf G}^\theta$.
Throughout this paper we assume that $\sf K$ is the connected component of the fixed point set of $\theta$.

Let ${\sf W}=N_{\sf G}({\sf T})/{\sf T}$
be the Weyl group.  
Let $r$ be the rank of the root system of ${\sf G}$ and $\Delta=\{\ga_1,\hdots,\ga_r\}$ 
be the system of simple roots corresponding to ${\sf B}$, with 
$\{\omega_1,\hdots,\omega_r\}$
the corresponding fundamental weights.  Denote the simple reflection corresponding to the simple root $\ga_i$ by $s_{i}$.
Thus, ${\sf W}$ is generated by the simple reflections 
$\{s_{i} \mid 1\leq i\leq r\}$.

Given $I\subseteq \Delta$, ${\sf P}_I$ is the standard parabolic subgroup
of ${\sf G}$ corresponding to $I$; namely, 
\begin{equation}
\label{eqn:standardpara}
{\sf P}_I = {\sf B} \cup \left( \bigcup_{w\in W_I} {\sf B} w {\sf B} \right),
\end{equation}
where $W_I$ is the set of $w\in W$ such that $w=s_{i_1}\cdots s_{i_k}$ where all $i_j\in I$.
$P_I$ is a {\bf minimal parabolic} if $I=\{\ga_i\}$; it is a {\bf maximal parabolic} if
$I=\{\ga_1,\hdots,\widehat{\ga_i},\hdots,\ga_r\}$. These are denoted 
${\sf P}_i$ and ${\sf P}_{\widehat{i}}$, respectively.

As described in \cite[Section 3.10]{Richardson-Springer-90}, the \textbf{Richardson-Springer monoid} ${\mathcal M}({\sf W})$ is generated by the simple reflections $s_i$ of ${\sf W}$, with relations $s_i^2 = s_i$, together with the braid relations of $\sf W$.  As a set, this monoid may be canonically identified with 
${\sf W}$, with the ordinary product on ${\sf W}$ being replaced by the \textbf{Demazure product} $\star$, a product having the property that 
\[ s_i \star w = \begin{cases}
	s_iw & \text{ if $\ell(s_iw) > \ell(w)$,}\\
	w & \text{ otherwise,} \end{cases}
\]
where $\ell(\cdot)$ denotes ordinary Coxeter length, and where the juxtaposition $s_iw$ denotes the ordinary product in ${\sf W}$.
A \textbf{word}  $Q=(j_1,j_2,\hdots,j_N)$ is an ordered tuple of numbers from 
$\{1,2,\ldots, r\}$.  
Let 
$\text{Dem}(Q):=s_{j_1} \star s_{j_2} \star \hdots
\star s_{j_N}$.
 If $\text{Dem}(Q) = w$, then 
$Q$ is a \textbf{Demazure word} for $w$.

Consider the natural projection
$\pi_i: {\sf G}/{\sf B} \rightarrow {\sf G}/{\sf P}_i$.
Given a ${\sf K}$-orbit closure $Y$ on ${\sf G}/{\sf B}$ and a simple reflection $s_{i} \in {\sf W}$, 
$s_{i} \star Y:=\pi_i^{-1}(\pi_i(Y))$
is also a ${\sf K}$-orbit closure.
This extends to an ${\mathcal M}({\sf W})$-action on the set of ${\sf K}$-orbit closures:  given a Demazure word
$Q=(s_{j_1},\hdots,s_{j_N})$ for $w$, define
\[w \star Y := s_{j_1} \star (s_{j_2} \star \hdots \star (s_{j_N} \star Y) \hdots ).\]
The ${\sf K}$-orbit closure $w \star Y$ is independent of the choice of Demazure word $Q$ for $w$ \cite[Section 4.7]{Richardson-Springer-90}.  

The
\textbf{weak order} on the set of ${\sf K}$-orbit closures is defined by 
\[Y \leq Y' \iff Y' = w \star Y\]
for some $w \in {\mathcal M}({\sf W})$.
The minimal elements of this order are the {\bf closed orbits}, i.e., those $Y_0={\mathcal O}={\overline {\mathcal O}}$.
The following is well-known; see, e.g., \cite[Proposition~2.2(i)]{Brion1999}:

\begin{lemma}
\label{lemma:closedisflag}
Each closed orbit is isomorphic to ${\sf K}/{\sf B}'$ where ${\sf B}'$ is a Borel subgroup of ${\sf K}$. In particular, every closed orbit is smooth.
\end{lemma}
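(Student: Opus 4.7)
The plan is the classical orbit--stabilizer argument combined with a projectivity/parabolic-subgroup input. First, pick any point $x \in Y_0$. Since $Y_0$ is a closed ${\sf K}$-orbit (a closed orbit is automatically a single orbit: its boundary would be a proper closed ${\sf K}$-invariant subset of strictly smaller dimension, hence empty), we get $Y_0 = {\sf K} \cdot x \cong {\sf K}/{\sf H}$ where ${\sf H} := \mathrm{Stab}_{\sf K}(x)$ is a closed subgroup of ${\sf K}$. Writing $x = g{\sf B}$ for some $g \in {\sf G}$, the stabilizer is ${\sf H} = {\sf K} \cap g{\sf B}g^{-1}$, which is closed in ${\sf K}$ and, being contained in the conjugate Borel $g{\sf B}g^{-1}$, is \emph{solvable}.

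Next, I would use that $Y_0$ is closed in the projective variety ${\sf G}/{\sf B}$, hence projective; therefore ${\sf K}/{\sf H}$ is complete. The standard Borel theorem (for connected reductive ${\sf K}$) asserts that ${\sf K}/{\sf H}$ is complete if and only if ${\sf H}$ contains a Borel subgroup ${\sf B}'$ of ${\sf K}$. Combining this with the solvability of ${\sf H}$ and the fact that ${\sf B}'$ is a maximal connected solvable subgroup of ${\sf K}$, one sees ${\sf H}^\circ = {\sf B}'$. Since ${\sf B}'$ is its own normalizer in ${\sf K}^\circ$, and since $Y_0$ is connected (being an orbit of the connected group ${\sf K}^\circ$ acting on a connected closed subvariety of ${\sf G}/{\sf B}$), the component group pieces match up to give ${\sf H} = {\sf B}'$ up to what is meant by ``Borel of ${\sf K}$''. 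Thus $Y_0 \cong {\sf K}/{\sf B}'$.

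The main obstacle is the possibility that ${\sf K}$ is disconnected, which is genuinely an issue for symmetric subgroups (e.g.\ $\mathrm{O}_n$ or $\mathrm{S}({\sf GL}_p \times {\sf GL}_q)$ in some conventions). One must be careful to state what ``Borel of ${\sf K}$'' means and to ensure that no extra components of ${\sf H}$ creep in. The cleanest way is to work with ${\sf K}^\circ$ throughout: ${\sf K}^\circ / ({\sf K}^\circ \cap {\sf H})$ exhausts $Y_0$ because $Y_0$ is connected, so one can take ${\sf B}'$ to be a Borel of ${\sf K}^\circ$ (which is what Brion's statement means) and conclude. Because this is a well-known result stated exactly as we need it in \cite{Brion1999}, the intended proof in the paper is presumably just to cite it, and indeed that is what the \texttt{\textbackslash qed} after the statement indicates.
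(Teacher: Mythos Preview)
Your proposal is correct and aligns with the paper's treatment: the paper does not give a proof at all but simply cites \cite[Proposition~2.2(i)]{Brion1999} and places a \qed, exactly as you surmised. Your sketch of the orbit--stabilizer argument (solvable stabilizer plus completeness forcing the stabilizer to be a Borel) is the standard proof behind Brion's cited proposition, and your caveat about disconnected ${\sf K}$ is apt though not needed for the paper's running example.
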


\begin{remark}
If $\sf K$ is disconnected, then \cite[Proposition~2.2(i)]{Brion1999} says that $Y_0$ is isomorphic to a finite union of flag manifolds ${\sf K}^0/{\sf B}'$, where ${\sf K}^0$ is the connected component of $1$ and ${\sf B}'$ is a Borel subgroup of ${\sf K}^0$.
\end{remark}

\section{Barbasch-Evens-Magyar varieties}\label{sec:BEManyG}

As in the previous section, ${\sf G}$ is a connected reductive complex algebraic group, ${\sf K}={\sf G}^\theta$ is connected, and ${\sf B}$ is a $\theta$-stable Borel subgroup of ${\sf G}$. 
We begin with the 
definition of the manifold of D.~Barbasch-S.~Evens
\cite[Section~6]{Barbasch-Evens-94}.

If ${\sf B}^{k-1}$ acts on $X_k\times\cdots\times X_1$ by  
\begin{equation}
\label{eqn:recallabc345}
(b_k,\ldots,b_1)\cdot(x_k,\cdots,x_1)=(x_kb_k,b_k^{-1}x_{k-1}b_{k-1},\ldots, b_{2}^{-1}p_1),
\end{equation}
then $X_k \times^{\sf B}\cdots\times^{\sf B} X_1$ denotes the quotient of $X_k\times\cdots\times X_1$ by this action, when it exists.
In \cite[\S 3.3]{Graham-Zierau} this quotient is shown to exist in cases which include when the $X_i$ for $i>1$ are subgroups of $\sf G$ such that ${\sf B}\subset X_i\cap X_{i+1}$ and $X_1={\sf P}/{\sf B}$ for some parabolic subgroup $\sf P$ of $\sf G$.

Let $Y_0$ be a closed $K$-orbit and $Q=(j_1,j_2,\ldots,j_N)$ a (not necessarily reduced) word.
The \textbf{Barbasch-Evens variety}  \cite[(6.3.5)]{Barbasch-Evens-94} for $Y_0$ and $Q$ is
	\begin{equation}
	\label{eq:BEvar}
	 {\mathcal B}{\mathcal E}^{Y_0,Q} := \widetilde{Y_0} \times^{\sf B} 	{\sf P}_{j_N} \times^{\sf B} {\sf P}_{j_{N-1}} \times^{\sf B} \hdots \times^{\sf B} {\sf P}_{j_1}/{\sf B},	
	\end{equation}
where $\widetilde{Y_0}$ denotes the preimage of $Y_0$ in ${\sf G}$ under ${\sf G} \rightarrow {\sf G}/{\sf B}$.
By \cite[\S 3.3]{Graham-Zierau} this quotient exists.
Recall that by Lemma~\ref{lemma:closedisflag} $Y_0$ is smooth.
Since ${\mathcal B}{\mathcal E}^{Y_0,Q}$ is an iterated ${\mathbb P}^1$-bundle over $Y_0$, then ${\mathcal B}{\mathcal E}^{Y_0,Q}$ is a manifold.

\begin{remark}
Even though \eqref{eq:BEvar} looks different from  \cite[(6.3.5)]{Barbasch-Evens-94} \eqref{eq:BEvar} is actually a special case.
This is because by \cite[(6.3.2)]{Barbasch-Evens-94} any closed $\sf K$-orbit $Y_0$ is isomorphic to a ${\sf K}\times_{{\sf K}\cap {\sf P}}{\sf P}/{\sf B}$ so the preimage of $Y_0$ in $\sf G$ can be taken to be ${\sf K}\times_{{\sf K}\cap {\sf P}}{\sf P}$. 
In general, ${\sf K}\times_{{\sf K}\cap {\sf P}} {\sf P}/{\sf B}$ need not be a closed $\sf K$-orbit.
Description \eqref{eq:BEvar} appears in \cite[Section 5]{Knutson-12}. 
\end{remark}

${\sf K}$ acts on ${\mathcal B}{\mathcal E}^{Y_0,Q}$ by
	\begin{equation}
	\label{eqn:actionMay16}
	{\sf k}\cdot[{\sf g},p_N,\hdots,p_1{\sf B}]=[{\sf k}{\sf g},p_N,\hdots,p_1{\sf B}].
	\end{equation}

There is a ${\sf K}$-equivariant map $\beta:{\mathcal B}{\mathcal E}^{Y_0,Q} \rightarrow G/B$ given by 
\begin{equation}
\label{eqn:themapMay16}
\displaystyle{[{\sf g},p_N,\hdots,p_1{\sf B}] \lmp{\beta} {\sf g}p_N \hdots p_1{\sf B}}.
\end{equation}
Indeed, both the action (\ref{eqn:actionMay16}) and the map
(\ref{eqn:themapMay16}) are well-defined, i.e., independent of choice of
representative of the equivalence class $[{\sf g},p_N,\hdots,p_1]$.

R.~W.~Richardson-T.~A.~Springer \cite{Richardson-Springer-90} proved
that for any $Y$, there is a
closed orbit $Y_0$ (possibly
non-unique) below it in weak order.  
That is, there is some
$w \in {\sf W}$
such that 
$Y = w \star Y_0$ and $\dim(Y)=\ell(w)+\dim(Y_0)$.
Let $Y$ and $w$ be as above and $Q=({j_1},{j_2},\ldots,{j_{\ell(w)}})$ be a reduced word for $w$.
Then $\beta:{\mathcal B}{\mathcal E}^{Y_0,Q} \rightarrow Y$ is generically finite since $\beta$ is
surjective onto $Y$ (by \cite[Proposition~6.4]{Barbasch-Evens-94}) and since
\[\dim({\mathcal B}{\mathcal E}^{Y_0,Q})=\dim(Y_0)+\ell(w)=\dim(Y).\]
When $({\sf G},{\sf K})=({\sf GL}_{p+q},{\sf GL}_p \times {\sf GL}_q)$, $\beta$ is a resolution of singularities for $Y$, again by \cite[Proposition~6.4]{Barbasch-Evens-94} (see also \cite[Lemma~5.1]{Knutson-12}).
Even for this case,
not all reduced words $Q$ give a resolution of singularities
$\beta$, although this is true if $\beta$ is generically one-to-one, that is, under the condition that $\dim(Y)=\ell(w)+\dim(Y_0)$. In general, the image of $\beta$ is $Y$ and $\beta:{\mathcal B}{\mathcal E}^{Y_0,Q} \rightarrow Y$ is a resolution of singularities for $Y$	
	 under certain hypotheses \cite[Section~6]{Barbasch-Evens-94}.\footnote{To construct a resolution of singularities, it is not necessary to take $Y_0$ to be a closed orbit.
	We need only take $Y_0$ to be a smooth orbit closure underneath $Y$ in weak order \cite{Knutson-12}, or take $Y_0$ to be the closure of a ``distinguished'' orbit \cite{Barbasch-Evens-94}.  	
	However, closed orbits are both smooth and distinguished. 
	Taking them as a starting point seems closest in spirit to the construction of the Bott-Samelson resolution.}
These hypotheses are sufficiently technical that we do not wish to recall them here since we will
not use them. 

We now define the Barbasch-Evens-Magyar varieties.
See Section~\ref{sec:A-BEMs} for a diagrammatic description in type A.

\begin{definition}[Barbasch-Evens-Magyar variety]
\label{def:BEM}
Suppose that 
$Q=({j_1},{j_2},\ldots,{j_N})$
is a (not necessarily reduced) word. 
The Barbasch-Evens-Magyar variety is
\begin{equation}
\label{eqn:Magyarform}
\bem:=Y_0
\times_{{\sf G}/{\sf P}_{j_{N}}}{\sf G}/{\sf B}\times_{{\sf G}/{\sf P}_{j_{N-1}}}
\cdots\times_{{\sf G}/{\sf P}_{j_1}}{\sf G}/{\sf B}.
\end{equation}
\end{definition}
Recall that if $X_1 \stackrel{f}{\rightarrow} Y$ and $X_2 \stackrel{g}{\rightarrow} Y$ are two varieties mapping to the same variety $Y$, then
\begin{equation}
\label{eqn:Magyarhelper}
X_1\times_{Y}X_2=\{(x_1,x_2)\in X_1\times X_2 \mid f(x_1)=g(x_2)\}
\end{equation}
denotes the {\bf fiber product}. In (\ref{eqn:Magyarform}),
each map of (\ref{eqn:Magyarhelper}) is the natural projection
${\sf G}/{\sf B}\to {\sf G}/{\sf P}_{j_i}$ defined by $g{\sf B} \mapsto g{\sf P}_{j_i}$ 
(or, in the case of $Y_0$, the restriction of said projection).

Evidently, ${\sf K}$ acts diagonally on $\bem$. 
Our next theorem asserts that the projection
\begin{align}
\label{eqn:theta987}
\varphi: \bem&\rightarrow {\sf G}/{\sf B}\\
(x_{N+1},x_N,\ldots,x_1)&\mapsto x_1\nonumber
\end{align}
maps into $Y$.
We remark that since we are not assuming that $Q$ is a reduced word, this map may not be generically one-to-one.

\begin{theorem}
\label{thm:main}
Suppose that $Y=w\star Y_0$
for a closed orbit $Y_0$ and $Q$ is a (not necessarily reduced) Demazure word for $w$.
\begin{itemize}
\item[(I)] 
$\bem\cong \mathcal{BE}^{Y_0,Q}$
as ${\sf K}$-varieties.
\item[(II)] Suppose $Y$ is the closure of the {\sf K}-orbit ${{\sf K}g{\sf B}}$. The fiber of $\varphi$ over a point 
of ${\sf K}g{\sf B}$ of $Y$ is smooth of dimension
$\dim(\bem)-\dim(Y)$.
\end{itemize}
\end{theorem}
\begin{proof}
We prove (I) by a modification of the argument of P.~Magyar in the Schubert setting. The map
\begin{align} \phi: {\mathcal B}{\mathcal E}^{Y_0,Q} &\rightarrow Y_0\times ({\sf G}/{\sf B})^N  \\
[g,p_N,p_{N-1},\ldots,p_1{\sf B}]&\mapsto (g{\sf B},gp_N {\sf B}, gp_N p_{N-1} {\sf B},\cdots, gp_N p_{N-1}\cdots p_1 {\sf B}), 
\end{align}
is well defined (independent of choice of representative), ${\sf K}$-equivariant, and 
$\phi({\mathcal B}{\mathcal E}^{Y_0,Q})\subseteq \bem$
since $p_i\in {\sf P}_{j_i}$. 

\noindent
{\sf $\phi$ is injective:} If 
$\phi([g,p_N,p_{N-1},\ldots,p_1{\sf B}])=\phi([g',p'_N,p'_{N-1},\ldots,p'_1{\sf B}])$,
then there exist $b_0,b_1,\ldots,b_N\in{\sf B}$ such that
$$g=g'b_0, \ \ \ gp_N=g'p'_N b_N, \ \ \ \ldots, \ \ \ gp_N\cdots p_1=g'p'_N \cdots p'_1b_1.$$
Combining these equations with the definition of ${\mathcal B}{\mathcal E}^{Y_0,Q}$ (specifically (\ref{eqn:recallabc345})),
\begin{align*}[g,p_N,p_{N-1},\ldots,p_1{\sf B}]&=[g'b_0,b_0^{-1}p'_N b_N, b_N^{-1}p'_{N-1}b_{N-1},\ldots, b^{-1}_2p'_1b_1{\sf B}]\\
&=[g',p_N',p_{N-1}',\ldots,p_1'{\sf B}],\end{align*}
establishing injectivity.

\noindent
{\sf $\phi$ is surjective:} Let 
$(g {\sf B},g_N {\sf B},g_{N-1} {\sf B},\ldots,g_1 {\sf B}) \in \bem$.

\begin{claim}
$[g,g^{-1}g_N,g_N^{-1}g_{N-1},\ldots,g_2^{-1}g_1{\sf B}]\in {\mathcal B}{\mathcal E}^{Y_0,Q}$. 
\end{claim}
\noindent
\emph{Proof of Claim:} First, by definition $g\in \widetilde{Y_0}$, as desired.
Second, by (\ref{eqn:Magyarform}) and (\ref{eqn:Magyarhelper}) combined we have
\[g{\sf P}_{j_N}=g_N {\sf P}_{j_N} \implies g^{-1}g_N\in {\sf P}_{j_N}.\]
Similarly, in general 
\[g_i{\sf P}_{j_{i+1}}=g_{i+1}{\sf P}_{j_{i+1}} \implies g_i^{-1}g_{i+1}\in {\sf P}_{j_{i+1}},\]
as required. \qed

Combining the claim with
$\phi(g,g^{-1}g_N,g_N^{-1}g_{N-1},\ldots,g_2^{-1}g_1{\sf B})=(g {\sf B}, g_N {\sf B},g_{N-1} {\sf B},\ldots,g_1 {\sf B})$,
we obtain
$\phi({\mathcal B}{\mathcal E}^{Y_0,Q})=\bem$.

\noindent
{\sf $\varphi$ maps into $Y$:}
Since $\beta$ maps into $Y$,
$$\beta\circ\phi^{-1}(g {\sf B},g_N {\sf B},g_{N-1} {\sf B},\ldots,g_1 {\sf B})= \beta(g,g^{-1}g_N,g_N^{-1}g_{N-1},\ldots,g_2^{-1}g_1{\sf B})= g_1 {\sf B} \in Y.$$
However, by definition $\varphi(g {\sf B},p_N {\sf B},p_{N-1} {\sf B},\ldots,p_1 {\sf B})=p_1{\sf B}$
and so $\varphi$ maps into $Y$ as well.

Since $\bem$ is smooth (and thus normal) and
${\mathcal B}{\mathcal E}^{Y_0,Q}$ is irreducible, 
the bijective morphism (of ${\mathbb C}$-varieties) above is an isomorphism of varieties by Zariski's main theorem (see, e.g., \cite[Theorem 5.2.8]{Spring}).

For (II), we apply:
\begin{theorem}\cite[Corollary 10.7 of chapter III]{Hartshorne} 
\label{thm:Hartshorne}
Let 
$f:X\rightarrow Y$
be a morphism of varieties over an algebraically closed field $k$ of characteristic 0, and assume that $X$ is nonsingular. There is a nonempty open subset $V\subset Y$ such that 
$f:f^{-1}(V)\rightarrow V$
is smooth. In the case in which 
$f^{-1}(V)\neq \emptyset$, 
the fiber $f^{-1}(v)$ is nonsingular and 
$\dim(f^{-1}(v))=\dim(X)-\dim(Y)$
for all $v\in V$. 
\end{theorem}

Let  $f$ be the projection map $\varphi: \bem\to Y$.
Since $\bem$ is nonsingular, by Theorem~\ref{thm:Hartshorne} applied to this 
$f$, there exists a nonempty 
$V\subset Y$
such that $f$ restricted to $f^{-1}(V)$ is smooth. If $v\in V$ then said theorem 
says $\dim(f^{-1}(v))$ is of the desired dimension.

However, we want the above to be true for $p\in {\sf K}g{\sf B}$. To see this, note that everything said above holds for 
$f^{-1}({\sf k}V)$ for all ${\sf k}\in {\sf K}$ since $f$ is ${\sf K}$-equivariant and multiplication by ${\sf k}$ is a smooth morphism. 
Let $p \in{\sf K}g{\sf B}$ 
be a general point. Since ${\sf K}p {\sf B}$ is dense in 
$Y$, $Y\cap {\sf k}V\neq\emptyset$ for all ${\sf k}\in {\sf K}$. Now we can pick ${\sf k}$
so that $p \in {\sf k}V$, completing the argument.
\end{proof}

The generic fibers of part (II) of the theorem may be considered
an analogue of the \emph{brick variety} of 
\cite{Escobar}, which is the generic fiber of the Bott-Samelson map (this generic fiber being positive dimensional only when $Q$ is not a reduced word). See \emph{loc.~cit.} for a connection to the \emph{brick polytope} of \cite{Pilaud-Santos,Pilaud-Stump} and the associahedron.

\begin{remark}
\label{prop:P1bundle}
$\bem$ is an iterated ${\mathbb P}^1$-bundle over $Y_0$.
\end{remark}

Let 
\[p_{Y_0,Q}(z)=\sum_{k\geq 0}z^k\dim_{\Q}
H^{2k}(\bem; {\Q}),\]
and
\[r_{Y_0}(z)=\sum_{k\geq 0}z^k \dim_{\Q}H^{2k}(Y_0;{\Q})\]
be the Poincar\'{e} polynomials of $\bem$ and $Y_0$, respectively.

\begin{corollary}
\label{cor: poincare}
$p_{Y,Q}(z)=r_{Y_0}(z)(1+z)^N$, where $r_{Y_0}(z)$ is known since each closed orbit $Y_0$ is isomorphic to the flag variety of ${\sf K}$.
\end{corollary}
\begin{proof}
In view of Remark~\ref{prop:P1bundle}, the claim 
follows by repeated applications of the Leray-Hirsch theorem.
\end{proof}

Following \cite[Section~1.1.2]{Knutson-12},
a \textbf{stratification by closed subvarieties} of a variety $X$ is a decomposition 
$X=\bigcup_\xi \caS_\xi$ 
into closed varieties $\caS_{\xi}$ such that the intersection of any two closed strata is the union of strata.
We have a stratification of $\bem$ with strata given by subwords $P$ of $Q$. 
A \textbf{subword} of $Q=({j_1},\ldots,{j_N})$ is a list 
$P=(\beta_1,\ldots,\beta_N)$ such that 
$\beta_i\in \{-, j_i\}$. 

\begin{corollary}[of Theorem~\ref{thm:main}]
\label{cor:stratified}
 $\bem$ is stratified with strata given by subwords $P$ of $Q$. 
The stratum corresponding to a subword $P$ is 
$$\caS(P) =\{(x_{N+1},\ldots,x_1) \in\bem \mid x_i=x_{i+1} \text{ if } \beta_{N+1-i}=- \}.
$$
This stratum is canonically isomorphic to $\mathcal{BEM}^{Y_0,{\rm flat}(P)}$ where ${\rm flat}(P)$ is the word which deletes
all $-$ appearing in $P$.
\end{corollary}

\begin{proof} The union of these strata covers $\bem$ because 
$\caS(Q)=\bem$.
For $P=(\beta_1,\ldots,\beta_N)$ and $P'=(\beta'_1,\ldots,\beta'_N)$ define the subword 
$P\vee P'=(\gamma_1,\ldots,\gamma_N)$,
where $\gamma_i=-$ if $\beta_i$ or $\beta'_i$ equals $-$.
Then
\begin{align*}
\caS(P)\cap\caS(P') &= \{(x_{N+1},\ldots,x_1) \in\bem \mid x_i=x_{i+1} \text{ if } \beta_{N+1-i}=- \text{ or }\beta'_{N+1-i}=-\} \\
&=\caS(P\vee P').
\end{align*}
The isomorphism from $\caS(P)$ to $\mathcal{BEM}^{Y_0,{\rm flat}(P)}$
is the projection that deletes all components of $\caS(P)$ 
associated to a $-$.
\end{proof}

\section{Moment polytopes}\label{sec:mps}

The projective space $\mathbb{P}^d$ is a symplectic manifold with \emph{Fubini-Study} symplectic form.
Following \cite[Section 6.6]{CannasdaSilva2}, consider the restriction of the action of $T^d=(\C^*)^d$ on $\mathbb{P}^d$, given by coordinate-wise multiplication, to the compact real subtorus
$$
T^d_{\mathbb{R}}=\{(e^{i\theta_1},\ldots,e^{i\theta_d})\in(\mathbb{C}^*)^d\mid \theta_i\in\mathbb{R} \text{ for all }i\}.
$$
As explained in \cite[Example~4]{Knutson:symplectic} the action of $T^d_\mathbb{R}$ on $\mathbb{P}^d$ has a moment map. That is,
$\mathbb{P}^d$ is a Hamiltonian $T^d_\mathbb{R}$-manifold. 

Now let $X$ be a smooth algebraic variety with an action of a torus
$T\cong (\C^*)^n$ with $n\leq\dim(X)$ and a $T$-equivariant embedding into $\mathbb{P}^d$. 
Again, we restrict the $T$-action to the compact real subtorus $T_\mathbb{R}$.
Since $T$ is isomorphic to a subgroup of $T^d$ then \cite[p.~64; point~1.]{Knutson:symplectic} tells us that $\mathbb{P}^d$ is also a Hamiltonian $T_\mathbb{R}$-manifold.
Smoothness says $X$ is a $T$-invariant submanifold of $\mathbb{P}^d$. By \cite[p.~64; point~1.]{Knutson:symplectic}, it is a Hamiltonian $T_\mathbb{R}$-manifold.
Hence $X$ has a \emph{moment map}
\[\Phi: X \rightarrow \frt^*_\mathbb{R},\]
 where $\mathfrak{t}_\mathbb{R}^*\simeq \mathbb{R}^n$ is the dual of the Lie algebra of $T_\mathbb{R}$.  
There are only finitely many isolated fixed points since the fixed point locus $X^T$ is closed for the Zariski topology.
Therefore by \cite{Atiyah,Guillemin-Sternberg}, the image 
$\Phi(X)$ is a polytope in $\frt^*_\mathbb{R}$;
namely, it is the convex hull of the image under $\Phi$ of the $T_\mathbb{R}$-fixed points.  $\Phi(X)$ is known as the
\emph{moment polytope} of $X$. A primer on moment maps which outlines their most important properties,
including the ones we will use, can be found in \cite[Section~2.2]{Knutson:symplectic}. From now on, we will omit the subscript $\mathbb{R}$ from $T$ and the Lie algebra.

Moment map images provide a source of polytopes. 
It is natural to consider $\Phi(BS^Q)$ for $\sf T$ a maximal torus of $\sf G$, which is the moment polytope of the Bott-Samelson variety $BS^Q$. 
To our best knowledge, the first analysis of this polytope in the literature is \cite{Escobar} (an anonymous referee has suggested to us the relevance of the preprint \cite{Haerterich} who studies T-equivariant cohomology of Bott-Samelson varieties).

We are interested on the action of ${\sf S}:={\sf T}\cap {\sf K}$, a maximal  torus in ${\sf K}$, on $Y$ and on its BEM varieties.
We will show in Theorem~\ref{thm:moment} that $\Phi(\bem)$ is the convex hull of certain reflections of $\Phi_{\sf S}(BS^Q)$, where $\Phi_{\sf S}$ denotes the moment map of $BS^Q$ for the ${\sf S}$-action. The proof exploits the analogies between the descriptions of the manifolds.

In order to compute $\Phi(\bem)$ we embed $\bem$ into a product of ${\sf G}/{\sf P}_{\widehat{i}}$.
To compute $\Phi(\bem)$ it is not necessary to explicitly embed $\bem$ into projective space (via generalized Pl\"ucker embeddings followed by the Segre map). This is since
the Grassmannian ${\sf G}/{\sf P}_{\widehat{i}}$ is diffeomorphic to the coadjoint orbit $\mathcal{O}_{\omega_i}\cong {\sf G}_{\mathbb{R}}/ ({\sf G}_{\mathbb{R}})_{\omega_i}$ where $\omega_i\in\mathfrak{t}^*$ is a fundamental weight, $ {\sf G}_{\mathbb{R}}$ is a compact form of $ {\sf G}$, and $ ({\sf G}_{\mathbb{R}})_{\omega_i}$ denotes the stabilizer of $\omega_i$ under the coadjoint action, see e.g.~\cite{Caviedes}.
This coadjoint orbit is already a Hamiltonian ${\sf T}$-manifold with \emph{Kostant-Kirillov-Souriau} symplectic form and moment map 
\begin{align}
\label{eqn:729given}
\Phi_i: & \  {\sf G}_{\mathbb{R}}/ ({\sf G}_{\mathbb{R}})_{\omega_i}\to \mathfrak{g}^* \to \mathfrak{t}^*\simeq  \R^n,
\end{align}
where the first map is $ (g{\sf P}_{\widehat{i}}) \mapsto  g\omega_i$ and the second map is the projection induced from the inclusion $T\subset G$.
For each $g$ representing an element of the Weyl group of $G$ we have that $\Phi_i(g\omega_i)=g\omega_i$.

Actually, if we embed $\bem$ into projective space as indicated above we wouldn't get a different polytope anyway.
This is because the Kostant-Kirillov-Souriau form coincides with the pullback of the Fubini-Study form to ${\sf G}/{\sf P}_{\widehat{i}}$ under the ${\sf T}$-equivariant embedding given by the line bundle ${\mathcal L}(\omega_i)$,
see \cite[Remark~3.5]{Caviedes}. 

Thinking of the fundamental weights $\omega_i\in\mathfrak{t}^*$ as functions $\omega_i:\mathfrak{t}\rightarrow\mathbb{R}$,
$\omega_i|_{\mathfrak s}$ is the restriction of $\omega_i$ to $\mathfrak{s}\subset\mathfrak{t}$.

\begin{theorem}
\label{thm:moment}
$\bem$ has an embedding into a product of ${\sf G}/{\sf P}$ with $\sf P$ maximal as a symplectic submanifold of this product with Hamiltonian ${\sf S}$-action; the corresponding
moment polytope is
\begin{align*}
\Phi(\bem) &={\sf conv}\left\{x \left(\sum_{i=1}^r\omega_i|_{\mathfrak s}+\sum_{i={|Q|}}^{1}s_{j_{|Q|}}\cdots s_{j_i} \omega_i|_{\mathfrak s}\right) 
\mid \ x \in Y_0^{\sf S} \text{ and } (j_1,\ldots,j_{|Q|}) \subseteq Q \right\}\\
&={\sf conv}\left\{x \cdot  \Phi_{\sf S}(BS^Q)
\mid \ x \in Y_0^{\sf S}  \right\}.
\end{align*}
\end{theorem}
\begin{proof}
$\bem$ embeds into a product of ${\sf G}/{\sf P}$ with $\sf P$ maximal, as follows:
\begin{proposition}\label{prop:theembed} The following map is an embedding:
\begin{align*} \delta: \bem & \hookrightarrow \prod_{i=1}^r {\sf G}/{\sf P}_{\widehat{i}} \times \prod_{j=1}^{|Q|} 
{\sf G}/{\sf P}_{\widehat{i}_{|Q|-j+1}} \\
(x{\sf B},g_{|Q|}{\sf B},\ldots,g_1{\sf B}) & \mapsto(x{\sf P}_{\widehat{1}},\ldots,x{\sf P}_{\widehat{r}},g_{|Q|}{\sf P}_{\widehat{i}_{|Q|}},g_{|Q|-1}{\sf P}_{\widehat{i}_{|Q|-1}},\ldots, g_1{\sf P}_{\widehat{i_1}}).
\end{align*}
\end{proposition}

By \cite[9.2.2]{Vakil} the fibered product with a closed subscheme is a closed subscheme of the fibered product.
From this it immediately follows that BEM is a closed subscheme of a fibered product of ${\sf G}/{\sf B}$'s.
It is well known that this fibered product embeds into the desired product of ${\sf G}/{\sf P}_{\widehat{i}}$.
Thus the proposition follows.
To be more explicit, we include the proof below.

\begin{proof}
First to see that $\delta$ is injective suppose
$\delta(x{\sf B},a_{|Q|}{\sf B},\hdots,a_1{\sf B}) = \delta(y{\sf B},b_{|Q|}{\sf B},\hdots,b_1{\sf B})$, 
then 
$x{\sf P}_{\widehat{i}} = y{\sf P}_{\widehat{i}}$ for $1\leq i\leq r$ and therefore 
$y^{-1}x \in \bigcap_{i=1}^r {\sf P}_{\widehat{i}} = {\sf B}$.
Thus, $x{\sf B} = y{\sf B}$.  
Next, the assumption
\begin{equation}
\label{eqn:729abc}
a_{|Q|}{\sf P}_{\widehat{i_{|Q|}}} =
b_{|Q|}{\sf P}_{\widehat{i_{|Q|}}}
\implies b_{|Q|}^{-1}a_{|Q|}\in {\sf P}_{\widehat{i_{|Q|}}}.
\end{equation}
Also, using the definition of $\bem$ (\ref{eqn:Magyarhelper}),
\begin{equation}
\label{eqn:729xyz}
a_{|Q|}{\sf P}_{i_{|Q|}} = x{\sf P}_{i_{|Q|}} = y{\sf P}_{i_{|Q|}} = b_{|Q|}{\sf P}_{i_{|Q|}}\implies
b_{|Q|}^{-1}a_{|Q|} \in {\sf P}_{i_{|Q|}}.
\end{equation}
Combining (\ref{eqn:729abc}) and (\ref{eqn:729xyz}) gives
\[b_{|Q|}^{-1}a_{|Q|}\in
{\sf P}_{\widehat{i_{|Q|}}}\cap {\sf P}_{i_{|Q|}}={\sf B}
\implies
a_{|Q|}{\sf B} = b_{|Q|}{\sf B}.\]  
Reasoning similarly, we see that 
$a_k{\sf B}=b_k{\sf B}$ for all $k=|Q|-1, |Q|-2,\hdots,1$, as required. Thus $\delta$ is injective.

It is well known that the map
\[{\sf G}/{\sf B}\rightarrow\prod_{i=1}^r{\sf G}/{\sf P}_{\widehat{i}}: x{\sf B}\mapsto
(x{\sf P}_{\widehat{1}},\ldots,x{\sf P}_{\widehat{r}})\]
is an embedding of algebraic varieties. Consequently, the map
\begin{align*}
\kappa:({\sf G}/{\sf B})^{|Q|+1}&\hookrightarrow \prod_{m=1}^{|Q|+1}\prod_{i=1}^r{\sf G}/{\sf P}_{\widehat{i}}\\
(x_{|Q|+1}{\sf B},x_{|Q|}{\sf B},\ldots,x_1{\sf B}) & \mapsto
((x_{|Q|+1}{\sf P}_{\widehat{1}},\ldots,x_{|Q|+1}{\sf P}_{\widehat{r}}),
\ldots,(x_{1}{\sf P}_{\widehat{1}},\ldots,x_{1}{\sf P}_{\widehat{r}}))
\end{align*}
is also an embedding.
Let $Q=(q_1,q_2,\ldots,q_N)$.
The image of $\bem\subset ({\sf G}/{\sf B})^{|Q|+1}$ under $\kappa$ satisfies  
\begin{equation}\label{eq:bemRepeat}
x_m{\sf P}_{\widehat{i}}=x_{m+1}{\sf P}_{\widehat{i}}
\end{equation}
whenever $i\neq q_m$, for $m=1,2,\ldots,|Q|$. Thus $\delta$ factors:
\begin{center}
\begin{tikzpicture}
\node (top) at (0,0) {$\bem$};
\node (delta) at (2,-.75) {$\delta$};
\node (kappa) at (1.7,.2) {$\kappa$};
\node (pi) at (4.2,-.9) {$\psi$};
\node (grgr) at (4,0) {$\prod_{m=1}^{|Q+1|}\prod_{i=1}^r{\sf G}/{\sf P}_{\widehat{i}}$};
\node (gr) at (4,-2) {$\prod_{i=1}^r {\sf G}/{\sf P}_{\widehat{i}} \times \prod_{j=1}^{|Q|} 
{\sf G}/{\sf P}_{\widehat{i_{|Q|-j+1}}}$}; 
  \draw [->] (top) -- (grgr);
    \draw[->] (grgr)--(gr);
        \draw[->] (top)--(gr);
\end{tikzpicture}
\end{center}
where $\psi$ is the projection that forgets the repetitions of \eqref{eq:bemRepeat}.
Thus, $\delta$ is an embedding.
\end{proof}

$Gr:=\prod_{i=1}^r {\sf G}/{\sf P}_{\widehat{i}} \times \prod_{j=1}^{|Q|} 
{\sf G}/{\sf P}_{\widehat{i_{|Q|-j+1}}}$
is naturally a symplectic manifold, and is Hamiltonian with respect to the
(diagonal) action of ${\sf T}$.
By \cite[p.~64; point 1.]{Knutson:symplectic} the same is true for this action restricted to the subtorus ${\sf S}$.
As a submanifold of $Gr$, $\bem$ is also symplectic, and is clearly
stable under the ${\sf S}$-action.  From this it follows (cf. \cite[p.~65; point 4.]{Knutson:symplectic})
that the ${\sf S}$-action is Hamiltonian, whence $\bem$ has a moment map $\Phi$.  Then one sees from
\cite[p.~64; point 1. and p.~65; point 3.]{Knutson:symplectic} that $\Phi$ is given by
\begin{equation}
\label{eqn:729thesum}
 \bem \hookrightarrow Gr   \lra{\sum \Phi_i}\mathfrak{t}^*\lra{} \mathfrak{s}^*, 
\end{equation}
where $\Phi_i: {\sf G}/{\sf P}_{\widehat{i}} \rightarrow \mathfrak{t}^*$ is the moment map for ${\sf G}/{\sf P}_{\widehat{i}}$ and 
$\mathfrak{t}^*\rightarrow \mathfrak{s}^*$ is induced from the inclusion ${\sf S}\subset {\sf T}$.
The second map restricts functions $\mathfrak{t}\rightarrow \mathbb{R}$ to $\mathfrak{s}$.
Therefore, by (\ref{eqn:729given}) and (\ref{eqn:729thesum}) combined,
 the moment map $\Phi: \bem \rightarrow \mathfrak{s}^*$ is given by 
\begin{equation}\label{eqn:729combinedmoment}
 (x{\sf B},g_{|Q|}{\sf B},\ldots,g_1{\sf B})  \lmp{} 
\sum_{i=1}^r x\omega_i|_{\mathfrak s}+\sum_{i=1}^{|Q|}g_i\omega_i|_{\mathfrak s}. 
\end{equation}

\begin{proposition}[${\sf S}$-fixed points of $\bem$]
\label{prop:thefixed}
The ${\sf S}$-fixed points of $\bem$
are indexed by pairs $(x{\sf B},J)$, where $x{\sf B}$ is a ${\sf T}$-fixed point of ${\sf G}/{\sf B}$ contained
in $Y_0$, and $J=(\beta_1,\ldots,\beta_{|Q|})$ is a subword of $Q$.
Indeed, the fixed points are precisely the points
\begin{equation}
p_{(x{\sf B},J)}:=(x{\sf B},xs_{\beta_{|Q|}}{\sf B},xs_{\beta_{|Q|}}s_{\beta_{|Q|-1}}{\sf B},\ldots,xs_{\beta_{|Q|}}\cdots s_{\gb_1}{\sf B})\in \bem,
\end{equation}
where $s_{\gb_i}$ is the identity if $\gb_i=-$.
\end{proposition}
\begin{proof}
We first verify that 
$p_{(x{\sf B},J)}\in \bem$.
Note that for $i=1,\ldots,|Q|$, since by (\ref{eqn:standardpara}),
${\sf B}s_{\beta_i}{\sf B}\in{\sf P}_{j_i}$, in particular $s_{\beta_i}\in {\sf P}_{j_i}$ and hence
$$xs_{\beta_{|Q|}}\cdots s_{\beta_{i}}{\sf P}_{\beta_{i}}=xs_{\beta_{|Q|}}\cdots s_{\beta_{i-1}}{\sf P}_{\beta_{i}}.$$ 
Therefore $(xs_{\beta_{|Q|}}\cdots s_{\beta_{i-1}},xs_{\beta_{|Q|}}\cdots s_{\beta_i})$ satisfies \eqref{eqn:Magyarhelper} for $i=1,\ldots,|Q|$,
as needed.

Since 
\begin{equation}
\label{eqn:SisTfixed}
({\sf G}/{\sf B})^{\sf S}=({\sf G}/{\sf B})^{\sf T}
\end{equation}
(see \cite[pg.~128]{Brion1999}), the ${\sf S}$-fixed points of $Y_0$
are of the form $x{\sf B}$ where $x\in N_{\sf G}({\sf T})$. Therefore, 
\begin{equation}
\label{eqn:itnormals}
xs_{\beta_{|Q|}}\cdots s_{\beta_{i}}\in N_{\sf G}({\sf T}).
\end{equation}
Moreover,  
since ${\sf S}\subset {\sf T}$, for ${\sf t}\in{\sf S}$ we have 
by (\ref{eqn:itnormals}) that
$$
{\sf t}\cdot xs_{\beta_{|Q|}}\cdots s_{\beta_{i}}{\sf B}=xs_{\beta_{|Q|}}\cdots s_{\beta_{i}}{\sf B},$$
so $p_{(x{\sf B},J)}$ is an ${\sf S}$-fixed point.

Conversely, suppose $(x_{|Q|+1}{\sf B},x_{|Q|}{\sf B},\ldots,x_{1}{\sf B})$ is an ${\sf S}$-fixed point of $\bem$. Clearly, 
$x_{|Q|+1}{\sf B}\in (Y_0)^{\sf S}$.
By (\ref{eqn:SisTfixed}), each $x_i{\sf B}$ is a ${\sf T}$-fixed point so we may assume 
\begin{equation}
\label{eqn:wemayassume729}
x_i\in N_{\sf G}({\sf T}).
\end{equation}
By the definition (\ref{eqn:Magyarhelper}) of $\bem$,
$x_i{\sf P}_{{j_i}}=x_{i-1}{\sf P}_{{j_i}}$. 
Thus,
 $x_{i-1}^{-1}x_i\in{\sf P}_{{j_i}}$. 
Hence, in view of (\ref{eqn:wemayassume729}) we may further assume that
$x_{i-1}^{-1}x_i\in\{id, s_{\ga_{j_i}}\}$.
Therefore $(x_{|Q|+1}{\sf B},x_{|Q|}{\sf B},\ldots,x_{1}{\sf B})$ is of the form $p_{
(x{\sf B},J)}$, as asserted.
\end{proof}

Since $\Phi(\bem)$ is the convex hull of $\Phi$ applied to this set of points, the first equality of the theorem holds by 
Proposition~\ref{prop:thefixed} combined with
(\ref{eqn:729combinedmoment}).

Similar arguments \cite{Escobar} show
the moment polytope of a Bott-Samelson variety is 
\begin{equation}\label{eq:bsmoment}
\Phi(BS^Q) ={\sf conv}\left\{\sum_{i=1}^r\omega_i+\sum_{i={|Q|}}^{1}s_{\beta_{|Q|}}\cdots s_{\beta_i} \omega_i
\mid \ (\beta_1,\ldots,\beta_{|Q|}) \text{ is a subword of } Q \right\}.
\end{equation}
The second equality follows by restricting the weights to $\mathfrak{s}$.
\end{proof}

Define the {\bf BEM polytope} ${\mathcal P}_{Y_0,Q}$ as $\Phi(\mathcal{BEM}^{Y_0,Q})$.
In Section~\ref{sec:furtherPQ} we study ${\mathcal P}_{Y_0,Q}$ for the symmetric pair ${\sf G}={\sf GL}_{p+q}$ and ${\sf K}={\sf GL}_{p}\times{\sf GL}_{q}$.

We remark it would be interesting to study the polytopes coming from the ${\sf K}$-action on $\bem$.
$\bem$ is a Hamiltonian ${\sf K}$-manifold and therefore has a moment map
$\Phi_{\sf K}$.
\cite[Section 2.5]{Knutson:symplectic} describes two polytopes which are
associated with the image of $\Phi_{\sf K}$.
One of these is the intersection of the image of $\Phi_{\sf K}$
with the positive Weyl chamber. Kirwan's noncommutative convexity Theorem
\cite{Kirwan} states that this intersection is a polytope.

\section{The Barbasch-Evens-Magyar varieties in type $A$}\label{sec:A-BEMs}

In this section we describe the $\sf K$-orbits and Barbasch-Evens-Magyar varieties for the case of symmetric pairs $({\sf G},{\sf K})$ where ${\sf G}$ is a general linear group.
The three possible pairs are $({\sf GL}_{p+q}, {\sf GL}_p\times {\sf GL}_q)$, $({\sf GL}_{2n},{\sf Sp}_{2n})$, and $({\sf GL}_n,{\sf O}_n)$.
All of these symmetric subgroups are connected, see e.g. \cite{Wyser-13}.

When ${\sf G}={\sf GL}_n$ we take the simple roots to be 
$\Delta=\{\alpha_i=\vec e_i-\vec e_{i+1}|1\leq i\leq n-1\}$,
where $\vec e_i\in{\mathbb R}^n$ is the standard basis vector.
With this choice of root system embedding, we may identify the
fundamental weight $\omega_i$ with the vector $\sum_{k=1}^i \vec e_i$.
${\sf W}={\mathfrak S}_n$ is identified with the symmetric group of permutations on $\{1,2,\ldots,n\}$.
Thus, $s_i$ is the simple transposition interchanging $i$ and $i+1$.

\subsection{$\sf K$-orbits of type A}\label{sec_typeA_BEM}

The running example of this section is $({\sf G},{\sf K})=({\sf GL}_{p+q},{\sf GL}_p \times {\sf GL}_q)$.
Let $n=p+q$, and consider the involution $\theta$ of ${\sf G}={\sf GL}_n$ defined by conjugation using the diagonal
matrix having $p$-many $1$'s followed by $q$-many $-1$'s.
Then 
${\sf K} = {\sf G}^{\theta} \cong {\sf GL}_p \times {\sf GL}_q$, 
embedded as block diagonal matrices with an upper-left
invertible $p \times p$ block, a lower-right invertible $q \times q$ block, and zeros outside of these blocks.

The orbits in this case are parametrized by $(p,q)$-clans \cite{Matsuki, Yamamoto}, which we now describe. 
A $(p,q)$-\textbf{indication} is a string of characters $\gamma=c_1 \hdots c_n$ with each $c_i\in\{+,-\}\cup{\mathbb Z}_{>0}$ and such that
\begin{itemize}
\item
if $c_i\in \Z_{>0}$, then there is a unique $j\neq i$ such that $c_i=c_j$; and 
\item 
$ \#\{i\mid c_i=+\}-\#\{i\mid c_i=-\}=p-q$.
\end{itemize}
Now, consider the equivalence relation between indications given by $\gamma\sim\gamma'$ if and only if 
\begin{itemize}
\item $c_i=c'_i$ whenever $c_i\in\{+,-\}$; and 
\item
there exists a bijection $f:\Z_{> 0}\to \Z_{> 0}$ such that $c_i=f(c'_i)$ for all $i$ with $c_i\in\Z_{> 0}$.
\end{itemize}
The $(p,q)$-\textbf{clans} are the equivalence classes of this equivalence relation.
By slight abuse of notation, we use the same notation for indications to denote clans; for example $1+-1$ is the same clan as $2+-2$.
Let ${\tt Clans}_{p,q}$ be the set of $(p,q)$-clans. 
The closed orbits are indexed by {\bf matchless clans}, i.e., clans using only $+,-$. Lemma~\ref{lemma:closedisflag} implies these closed orbits are isomorphic to $({\sf GL}_p/{\sf B})\times ({\sf GL}_q/{\sf B})$, the product of two flag varieties.

We briefly remark that each $(p,q)$-clan corresponds to an involution in ${\mathfrak S_{p+q}}$.
Indeed, given a clan $\gamma$ we obtain an involution $w$ by letting $w_i=i$ whenever $c_i\in\{+,-\}$ and $w_i=j$ whenever $c_i=c_j\in\Z_{>0}$.

Next, we explicitly describe the orbit closures $Y_{\gamma}$.
Fix  
$\gamma=c_1 \hdots c_n\in {\tt Clans}_{p,q}$.
For $i=1,\hdots,n$, define:
\begin{itemize}
	\item $\gamma(i;+) = \#\{ (j,k) \mid c_j=c_k\in\Z_{>0},\ 1\le j<k\le i\}+\#\{j\mid c_j=+,\ 1\leq j\leq i\}$; and
	\item $\gamma(i;-) = \#\{ (j,k) \mid c_j=c_k\in\Z_{>0},\ 1\le j<k\le i\}+\#\{j\mid c_j=-,\ 1\leq j\leq i\}$.
\end{itemize}
For $1 \leq i < j \leq n$, define
\begin{itemize}
	\item $\gamma(i;j) = \#\{k \in [1,i] \mid c_k = c_{\ell} \in \N \text{ with } \ell > j\}$.
\end{itemize}
Let $E_p={\rm span}\{\vec e_1,\vec e_2,\ldots,\vec e_p\}$
be the span of the first $p$ standard basis vectors, and let
$E^q={\rm span}\{\vec e_{p+1}, \vec e_{p+2},\ldots, \vec e_n\}$
be the span of the last $q$ standard basis vectors.  Let
$\rho: \C^n \rightarrow E_p$
 be the projection map onto the subspace $E_p$.

\begin{figure}[h]
\begin{center}
\includegraphics[scale=.3]{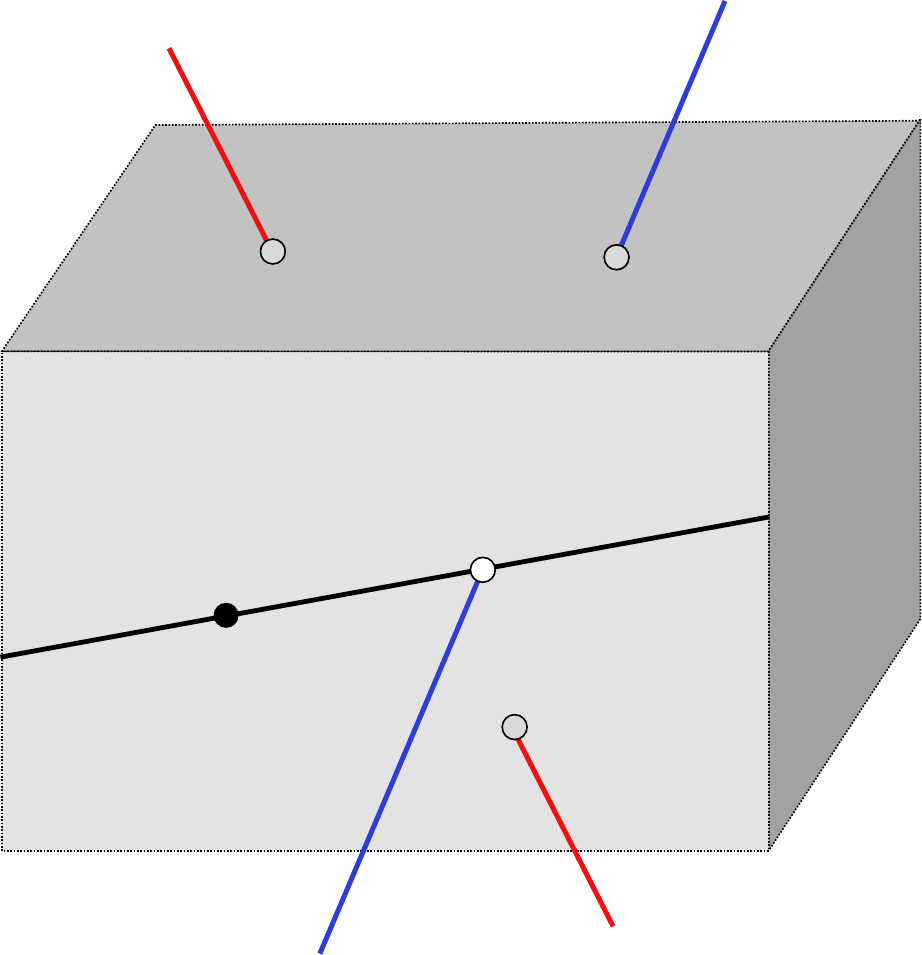} 
\caption{$Y_{1+-1}$}\label{fig:1+-1}
\put(-35,140){$E_2$}
\put(-116,134){$E^2$}
\put(-110,55){$V_1$}
\put(-134,75){$V_3$}
\put(-50,65){$V_2$}
\end{center}
\end{figure}

Suppose $\gamma \in {\tt Clans}_{p,q}$ and $\e\in {\tt Clans}_{r,s}$.  Then 
$\e=\e_1 \ldots \e_{r+s}$
{\bf (pattern) avoids} $\gamma=\gamma_1 \ldots \gamma_{p+q}$ if there are
no indices $i_1<i_2<\cdots<i_{p+q}$ such that:
\begin{enumerate}
 \item if $\gamma_j = \pm$ then $\e_{i_j}=\gamma_j$; and
 \item if $\gamma_k=\gamma_{\ell}$ then $\e_{i_k}=\e_{i_{\ell}}$.
\end{enumerate}
A clan $\gamma$ is {\bf noncrossing} if $\gamma$ avoids
$1212$.

\begin{theorem}[{\cite[Corollary 1.3]{Wyser-16}}, {\cite[Remark~3.9]{Richthing}}]
\label{thm:wyserbruhat}  
The $\sf K$-orbit closure
$Y_{\gamma}$ is the set of flags $F_{\bullet}=(V_1,\ldots,V_n)$ such that:
\begin{enumerate}
	\item $\dim(V_i \cap E_p) \geq \gamma(i;+)$ for all $i$;
	\item $\dim(V_i \cap E^q) \geq \gamma(i;-)$ for all $i$.
	\item $\dim(\rho(V_i) + V_j) \leq j + \gamma(i;j)$ for all $i<j$.
\end{enumerate}
If $\gamma$ is noncrossing, the third condition is redundant.
\end{theorem}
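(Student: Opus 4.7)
The plan is to establish the two inclusions between $Y_\gamma$ and the locus $Z_\gamma$ cut out by conditions (1)--(3), then deduce the noncrossing addendum. First I would verify that $Z_\gamma$ is a closed ${\sf K}$-stable subvariety of ${\sf G}/{\sf B}$: closedness follows from semicontinuity of $\dim(F_i \cap V)$ and $\dim(\rho(F_i)+F_j)$ as $F_\bullet$ varies, and ${\sf K}$-stability follows because ${\sf K}={\sf GL}_p\times {\sf GL}_q$ preserves $E_p$ and $E^q$ setwise while $\rho$ is ${\sf GL}_p$-equivariant on its image.

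To prove $Y_\gamma \subseteq Z_\gamma$, I would build from the clan $c_1\cdots c_n$ an explicit base-point flag $F^\gamma_\bullet$ in the open orbit of $Y_\gamma$: at step $i$, append $\vec e_{\gamma(i;+)}$ if $c_i=+$, append $\vec e_{p+\gamma(i;-)}$ if $c_i=-$, append a mixed vector $\vec e_a+\vec e_{p+b}$ when $c_i$ opens a matched pair, and append the complementary basis vector when the pair closes. A direct computation shows $F^\gamma_\bullet$ satisfies (1)--(3); since $Z_\gamma$ is closed, one deduces $Y_\gamma \subseteq Z_\gamma$. For the reverse inclusion I would induct on the weak order on ${\sf K}$-orbit closures. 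The base case is a closed orbit (so $\gamma$ is matchless): here Lemma~\ref{lemma:closedisflag} identifies $Y_\gamma$ with ${\rm Flags}(E_p)\times {\rm Flags}(E^q)$, which is precisely the locus defined by (1) and (2), while (3) is vacuous. For the inductive step, choose $\gamma'$ with $\gamma = s_i \star \gamma'$, use the identity $Y_\gamma = \pi_i^{-1}\pi_i(Y_{\gamma'})$, and establish the analogue $Z_\gamma = \pi_i^{-1}\pi_i(Z_{\gamma'})$ by tracking how $\gamma(i;\pm)$ and $\gamma(i;j)$ transform under the combinatorial action of $s_i\star$ on clans.

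For the noncrossing addendum, if $\gamma$ avoids $1212$ then its matched pairs form a system of nested arcs; I would verify that every arc contributing to $\gamma(i;j)$ already contributes a unit to either $\gamma(k;+)$ or $\gamma(k;-)$ at some intermediate index $k$, so that condition (3) is already forced by (1) and (2). The hard part will be the inductive step of the reverse inclusion: one must perform a case analysis showing that every combinatorial move on clans produced by $s_i\star$ (swapping adjacent symbols, pairing two adjacent unmatched signs of opposite sign, splitting a matched pair, or acting trivially) corresponds exactly to how the dimension inequalities propagate through the fiber-bundle operation $\pi_i^{-1}\pi_i$. Matching the clan combinatorics against the $\mathbb{P}^1$-bundle geometry of $\pi_i$ is where the bulk of the bookkeeping lies.
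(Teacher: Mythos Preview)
The paper does not prove this theorem at all: it is quoted as background from \cite{Wyser-16} and \cite{Richthing}, with no argument given in the present paper. So there is no ``paper's own proof'' to compare your proposal against.

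That said, your outline is broadly the strategy used in the cited references. The forward inclusion via an explicit representative flag and closure, together with the reverse inclusion by induction up the weak order using $Y_\gamma = \pi_i^{-1}\pi_i(Y_{\gamma'})$, is exactly how \cite{Wyser-16} proceeds. One caution: the step $Z_\gamma = \pi_i^{-1}\pi_i(Z_{\gamma'})$ is not automatic and is where the real content lies; you have correctly flagged this as the hard part, but be aware that the case analysis on the $s_i \star$ action on clans (especially the cases where a matched pair is created or where two numbers swap) requires careful verification that the rank conditions (3) propagate correctly, not just (1) and (2). The noncrossing addendum as you sketch it is also essentially the argument in \cite[Remark~3.9]{Richthing}.
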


\begin{example}
Let $p=q=2$ and $\gamma=1+-1$ (a noncrossing clan). 
In fact, $Y_{\gamma}=s_1\star s_3\star s_2 \star Y_{++--}$
 and
\[Y_\gamma=\{(V_1,V_2,V_3)\in {\sf Gr}(1,4)\times {\sf Gr}(2,4)\times {\sf Gr}(3,4) \mid \dim(V_2\cap E_2)\geq 1, \dim(V_3\cap E^2)\geq 1\}.\]
A projectivized depiction of a general point in this orbit closure is given in Figure~\ref{fig:1+-1}. The blue and red lines represent $E_2$ and $E^2$ respectively. The moving flag $(V_1,V_2,V_3)$ is the (black point, black line, front face).\qed
\end{example}

\begin{figure}[h]
\begin{center}
\includegraphics[scale=.3]{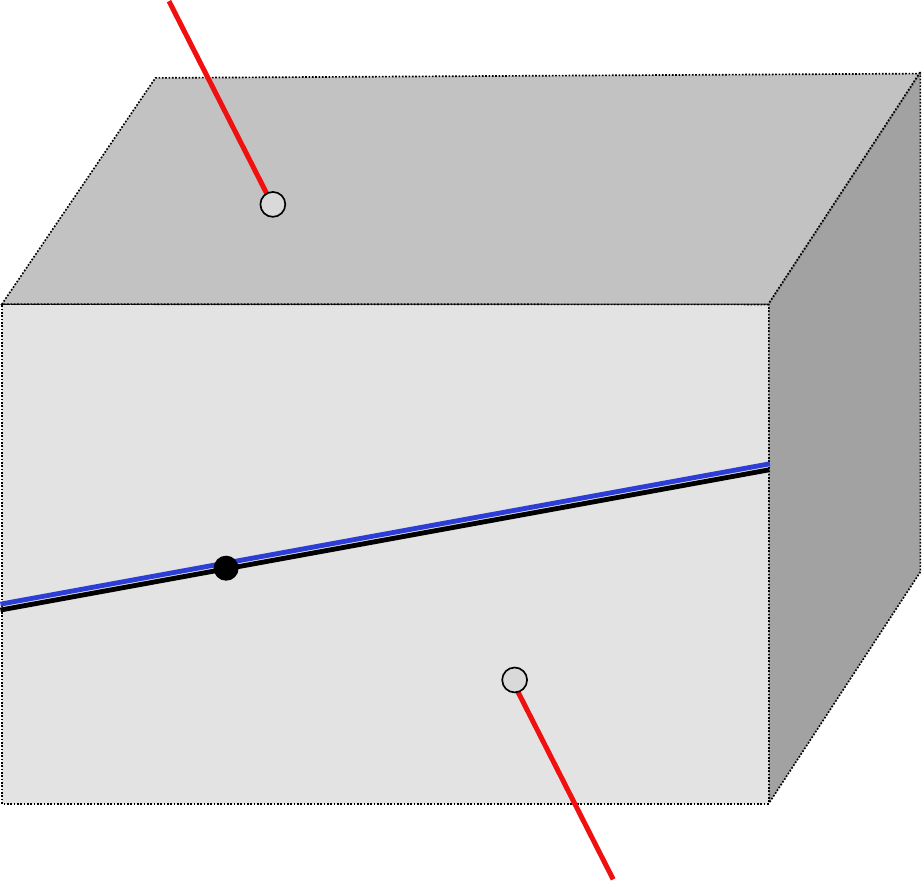} 
\caption{$Y_{++--}$}\label{fig:++--}
\put(-48,46){$E_2$}
\put(-116,130){$E^2$}
\put(-110,51){$V_1$}
\put(-135,73){$V_3$}
\put(-48,63){$V_2$}
\end{center}
\end{figure}

W.~McGovern characterized the singular orbit closures:

\begin{theorem}[\cite{McGovern}]
\label{thm:McGovern}
$Y_{\gamma}$ is smooth if and only if $\gamma$ avoids
the patterns
$1+-1, \ 1-+1, \ 1212, \ 1+221, \ 1-221, \ 122+1$, $122-1, \ 122331$.
\end{theorem}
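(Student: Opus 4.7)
My plan is to prove both directions of this pattern-avoidance characterization by combining the explicit rank-condition description of $Y_\gamma$ from Theorem~\ref{thm:wyserbruhat} with the weak order action of the Richardson-Springer monoid ${\mathcal M}({\sf W})$ on ${\sf K}$-orbit closures.

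For the ``only if'' direction, I would first verify by direct computation that each of the eight listed clans gives a singular orbit closure. For each $\gamma$ in the list, Theorem~\ref{thm:wyserbruhat} gives explicit rank conditions carving out $Y_\gamma$; one can then exhibit a ${\sf T}$-fixed flag (coming from a matchless subclan in the closure of $Y_\gamma$) at which the Zariski tangent space has dimension exceeding $\dim Y_\gamma$. Then I would prove that pattern containment propagates singularity: if $\theta$ contains $\gamma$ as a pattern and $Y_\gamma$ is singular, then $Y_\theta$ is singular. Following the template of Lakshmibai--Sandhya for Schubert varieties, the goal is to realize a neighborhood of a suitable ${\sf T}$-fixed point in $Y_\theta$ as a product of $Y_\gamma$ with a smooth factor, so that singularity pulls back.

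For the ``if'' direction, I would argue by induction on the dimension of $Y_\gamma$, using the weak order. The base cases are the closed orbits, which are smooth by Lemma~\ref{lemma:closedisflag}. For the inductive step, I would show that if $\gamma$ avoids the eight patterns, then there exists a simple reflection $s_i$ and a pattern-avoiding clan $\gamma'$ with $Y_\gamma = s_i \star Y_{\gamma'}$ such that $\pi_i: Y_\gamma \to \pi_i(Y_\gamma)$ realizes $Y_\gamma$ as a $\mathbb{P}^1$-bundle (rather than a generically $2:1$ cover), reducing smoothness of $Y_\gamma$ to smoothness of $\pi_i(Y_\gamma)$, which equals $Y_{\gamma'}$ up to a smooth base. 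The combinatorial heart is to verify that pattern avoidance is preserved under an appropriate choice of $s_i$.

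The main obstacle is the ``if'' direction, specifically handling the fact that in the ${\sf K}$-orbit setting the weak order maps $Y_\gamma \to \pi_i(Y_\gamma)$ can be either $\mathbb{P}^1$-bundles, generically $2:1$ covers, or isomorphisms, depending on the type of the simple root relative to the involution on the Weyl side. One must carefully certify, from avoidance of the eight patterns alone, that a $\mathbb{P}^1$-bundle move is always available; the $2:1$ case is precisely where singularities can be introduced, and it should match the presence of one of the eight obstructions. A secondary obstacle is checking exhaustiveness of the list of eight: a priori, there could be additional minimal singular clans, so some finite verification at low rank (via the rank conditions of Theorem~\ref{thm:wyserbruhat}) is needed to rule out further minimal obstructions before propagating by the pattern-containment lemma.
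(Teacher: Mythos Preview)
The paper does not prove this theorem; it is quoted from \cite{McGovern} as background and carries no argument in the present text. There is therefore nothing in the paper to compare your proposal against.

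As to your sketch on its own merits: the ``only if'' direction is plausible in outline, but the pattern--propagation step (producing a local product decomposition of $Y_\theta$ near a fixed point as $Y_\gamma$ times something smooth) does not follow formally from the Lakshmibai--Sandhya template for Schubert varieties; in the ${\sf K}$-orbit setting this requires a separate argument, and indeed such ``interval pattern'' transfer results for $({\sf GL}_{p+q},{\sf GL}_p\times{\sf GL}_q)$ are the subject of later work (e.g.\ \cite{Richthing}), not an immediate consequence of Theorem~\ref{thm:wyserbruhat}. The ``if'' direction has a genuine gap: your inductive claim that avoidance of the eight patterns always guarantees a simple reflection $s_i$ for which $\pi_i$ realizes $Y_\gamma$ as a $\mathbb{P}^1$-bundle over a smooth base, with $\gamma'$ again pattern-avoiding, is essentially the whole theorem and you have given no mechanism to establish it. In particular, the trichotomy you mention (bundle / generically $2{:}1$ / isomorphism) is governed by the Richardson--Springer root types (complex, noncompact imaginary, real/compact), and tying those types to the eight forbidden patterns is exactly the nontrivial combinatorial content that McGovern supplies; it cannot be waved through.
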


\begin{example}
By Theorem~\ref{thm:McGovern}, $Y_{1+-1}$ is singular. 
One computes (e.g, using the methods
of \cite{WWY}) that the singular locus is the closed orbit $Y_{++--}$ where $V_2=E_2$ (the black and blue lines agree). In Figure \ref{fig:1+-1}, the general picture of $Y_{1+-1}$, the black line $V_2$ has three degrees of freedom to move. Now consider the picture of $Y_{++--}$ (Figure \ref{fig:++--}). Pick any point of the blue line $E_2$.
Then the black line $V_2$ has two degrees of freedom to pivot and
remain inside $Y_{1+-1}$. This is true of any other point as well. Informally, this additional degree of freedom is singular behavior.\qed
\end{example}

\subsection{Barbasch-Evens-Magyar varieties of type A} 
In Section~\ref{sec:BEManyG} we gave our general definition of the
Barbasch-Evens-Magyar varieties. We now describe, using diagrams, these configuration spaces for the special
case of symmetric pairs $({\sf G},{\sf K})$ where ${\sf G}$ is a general linear group. 

\begin{wrapfigure}{l}{0.2\textwidth}
\begin{center}
  \begin{tikzpicture}
\node (top) at (0,0) {$\C^3$};
    \node [below of=top] (flag2)  {$F_2$};
    \node [below of=flag2] (flag1) {$F_1$};
    \node [right of=flag1] (L1) {$V_4$};
        \node [right of=flag2] (P1) {$V_3$};
        \node [right of=L1] (L2) {$V_2$};
\node [right of=P1] (P2) {$V_1$};
\node [below of=flag1] (down) {$\C^0$};
\draw [thick,shorten <=-2pt] (top) -- (flag2);
\draw [thick,shorten <=-2pt] (flag2) -- (flag1);
\draw [thick,shorten <=-2pt] (P1) -- (L1);
\draw [thick,shorten <=-2pt] (P1) -- (L2);
\draw [thick,shorten <=-2pt] (P2) -- (L2);
\draw [thick,shorten <=-2pt] (top) -- (P1);
\draw [thick,shorten <=-2pt] (top) -- (P2);
\draw [thick,shorten <=-2pt] (flag2) -- (L1);
\draw [thick,shorten <=-2pt] (flag1) -- (down);
\draw [thick,shorten <=-2pt] (L1) -- (down);
\draw [thick,shorten <=-2pt] (L2) -- (down);
\end{tikzpicture}
\end{center}
\end{wrapfigure} 

Let $Q=(j_1,j_2,\ldots,j_N)$ be a word and $\sf F\subseteq {\sf G}/{\sf B}$. We first define the configuration space $\mathcal{C}(Q,{\sf F})$.
Informally, a point of $\mathcal{C}(Q,{\sf F})$ is
a collection of vector spaces forming a diagram.
On the left is an example for ${\sf G}={\sf GL}_3$, ${\sf F}={\sf G}/{\sf B}$, and $Q=(2,1,2,1)$. The edges indicate
containments among the vector spaces.  For instance, we have $\C^0 \subset
F_1\subset F_2\subset \C^3$, as well as 
${\mathbb C}^0\subset V_2\subset V_1\subset \C^3$, etc. We require that
the flag $\C^0 \subset
F_1\subset F_2\subset \C^3$ be in ${\sf F}$.

To be precise, to define $\mathcal{C}(Q,{\sf F})$ we start
with a vertical chain whose $n+1$ vertices are labelled by 
the vector spaces $\C^0, F_1, F_2,\ldots F_{n-1}, \C^n$, from
south to north, such that the corresponding flag is an element of $\sf F$. The {\bf dimension} of a vertex is the 
dimension of the labelling vector space.
At the start, this chain is declared to be the {\bf right border
of the diagram}. 

We now grow the diagram as follows. Consider the last letter $j_N$ of $Q$.
Introduce a new vertex on the right of the diagram, labelled by $V_{N}$ of dimension
$j_N$ with edges between the vertices of dimension $j_N-1$
and $j_N+1$ (thus indicating the containment relation 
$F_{j_N-1}\subset V_N\subset F_{j_N+1}$). 
We modify the current right border by replacing the vertex of the current right border of dimension $j_N$ with the new vertex labelled by $V_N$.
Now repeat successively with $j_{N-1},j_{N-2},\ldots j_2,j_1$. At step $k$, a new vertex
labelled by $V_{N-k+1}$ is added to the right of the right border, of dimension $j_{N-k+1}$, and becomes the new member of the right border, replacing the unique
vertex of dimension $j_{N-k+1}$ of the current right border.
Note that the letter $j_{N-k+1}$ of $Q$ corresponds to the \textbf{quadrangle} in Figure~\ref{fig_quad}
	\begin{figure}
	\begin{tikzpicture}
	\node (top) at (0,0) {$V_a$};
    \node [below of=top] (flag3)  {$V_b$};
    \node [below of=flag3] (flag2) {$V_c$};
        \node [right of=flag3] (P1) {\qquad$V_{N-k+1}$};
\draw [thick,shorten <=-2pt] (top) -- (flag3);
\draw [thick,shorten <=-2pt] (flag3) -- (flag2);
\draw [thick,shorten <=-2pt] (P1) -- (top);
\draw [thick,shorten <=-2pt] (P1) -- (flag2);
	\end{tikzpicture}
	\caption{Quadrangle corresponding to  the letter $j_{N-k+1}$ of $Q$.}
	\label{fig_quad}
	\end{figure}
where $V_a$, $V_b$, and $V_c$ lie on the right border before $V_{N-k+1}$ is added, and $\dim(V_b)=j_{N-k+1}$.

Finally, a point in  $\mathcal{C}(Q,{\sf F})$ is a collection of vector spaces arranged in the diagram described, where the dimension of a vector space equals the length of a shortest path to $\mathbb{C}^0$ and $V_i\subset V_j$ whenever there is an upward edge $V_i\-- V_j$.
For the example above,
	\begin{align*}
	\mathcal{C}(Q,{\sf F})
	=
	\{(F_1,F_2,V_1,\ldots,V_4)\mid F_1,&V_4\subset F_2,\  V_4,V_2\subset V_3,\ V_2\subset V_1 \}
	\\
	&\subset {\sf Gr}(1,3)\times {\sf Gr}(2,3)\times {\sf Gr}(2,3)\times {\sf Gr}(1,3)\times {\sf Gr}(2,3)\times {\sf Gr}(1,3).
	\end{align*}

The above diagram extends the configuration space used in \cite{Magyar} to construct the Bott-Samelson variety. The difference is that \cite{Magyar} takes the initial chain to be a $\sf B$ or ${\sf GL}_n$-orbit, while here we take any subset $\sf F$. For Bott-Samelson varieties, the initial chain corresponds to a point (usually the standard basis flag).

The following result interprets the ${\sf G}={\sf GL}_n$ case of Theorem~\ref{thm:main}(I):

\begin{theorem} 
\label{thm:A}
The configuration space $\mathcal{C}(Q,Y_0)$ is the image of $\bem$ under the map $\delta$ of Proposition~\ref{prop:theembed}. 
Therefore, $\mathcal{C}(Q,Y_0)$ is isomorphic as a ${\sf K}$-variety to $ {\mathcal B}{\mathcal E}^{Y_0,Q}$.
\end{theorem}

\begin{proof}
In type $A$, the map $\delta$ may be interpreted as listing the vector spaces on the flags of successive right borders of the 
diagram for $\bem$, but avoiding
redundancy by listing only the additional new vector space introduced at each step. 
Thus the first part of the Theorem follows.
The isomorphism between $ {\mathcal B}{\mathcal E}^{Y_0,Q}$ and $\mathcal{C}(Q,Y_0)$ is the composition of the map in Theorem~\ref{thm:main} and with $\delta$. 
\end{proof}

\begin{definition}[Barbasch-Evens-Magyar variety for the symmetric pairs $({\sf GL}_n,{\sf K})$]
\label{def:BEM}
Suppose that 
$Q=({j_1},{j_2},\ldots,{j_N})$
is a (not necessarily reduced) word 
and $Y_0$ is a closed $\sf K$-orbit. Then we abuse notation and denote $\mathcal{C}(Q,Y_0)$ by $\bem$.
\end{definition}

Consider the map from $\bem$ to ${\sf G}/{\sf B}$ that maps a point in the configuration space to
the rightmost flag (corresponding to the rightmost border) in the diagram.  For example, the point of $\bem$ depicted by the example diagram above maps to the
flag $\C^0\subset V_2\subset V_1\subset \C^3$. 
The image of this map is a $\sf K$-orbit closure. Moreover, every $\sf K$-orbit closure is the image of such a map for some $\bem$. In fact, this map agrees with the map
$\varphi$ defined in (\ref{eqn:theta987}).

To complete the description of $\bem$ for the three type $A$ cases we require a description of the flags in the closed orbit $Y_0$, i.e., 
which flags may occur on the left hand side of the diagram.

In the case $({\sf G},{\sf K})=({\sf GL}_{p+q}, {\sf GL}_p\times {\sf GL}_q)$ the closed orbits are indexed by
matchless clans, i.e., $\gamma$ consists of
 $p$ $+$'s and $q$ $-$'s. The description of these orbits is given by Theorem~\ref{thm:wyserbruhat}. 
Since matchless clans are clearly noncrossing, the third condition is redundant.
In the case 
$({\sf G},{\sf K})=({\sf GL}_{2n},{\sf Sp}_{2n})$, there is a unique closed orbit $Y_0$ 
\cite[Proposition~2.4.1]{Wyser_12}.  This closed orbit is 
isomorphic to the flag variety for ${\sf K}={\sf Sp}_{2n}$ by Lemma~\ref{lemma:closedisflag}.
In the case of 
$({\sf G},{\sf K})=({\sf GL}_n,{\sf O}_n)$ with $n$ there is a unique closed orbit $Y_0$ \cite[Proposition~2.2.1 and Remark~2.3.3]{Wyser_12}.
Again, this orbit is isomorphic to the flag variety for ${\sf O}_n$.
For sake of brevity,
we refer the reader to \cite[Section~2]{Wyser-13} and the references therein for a linear algebraic description of the points
of the closed orbits in these cases.

\begin{example}
The diagram for $\bem$ where $Y_0=Y_{++--}$ and $Q=(1,3,2)$ is
\begin{figure}[h]\centering
  \begin{tikzpicture}
\node (top) at (0,0) {$\C^4$};
    \node [below of=top] (flag3)  {$F_3$};
    \node [below of=flag3] (flag2) {\hspace{-1cm}$E_2=F_2$};
    \node [below of=flag2] (flag1) {$F_1$};
    \node [right of=flag2] (L1) {$V_3$};
        \node [right of=flag3] (P1) {$V_2$};
                \node [right of=flag1] (V1) {$V_1$};
\node [below of=flag1] (down) {$\C^0$};
\draw [thick,shorten <=-2pt] (top) -- (flag3);
\draw [thick,shorten <=-2pt] (flag3) -- (flag2);
\draw [thick,shorten <=-2pt] (flag2) -- (flag1);
\draw [thick,shorten <=-2pt] (P1) -- (L1);
\draw [thick,shorten <=-2pt] (top) -- (P1);
\draw [thick,shorten <=-2pt] (flag3) -- (L1);
\draw [thick,shorten <=-2pt] (flag1) -- (down);
\draw [thick,shorten <=-2pt] (L1) -- (flag1);
\draw [thick,shorten <=-2pt] (down) -- (V1);
\draw [thick,shorten <=-2pt] (L1) -- (V1);
\end{tikzpicture}
\end{figure}

\noindent The depiction of this variety is given in Figure~\ref{fig:3}.
Here $V_1,V_3,V_2$ are given by the (projectivized) green point, line and plane respectively. The green spaces have the same incidence relations as 
the moving (black) flag in $Y_{1+-1}$. Thus, the projection forgetting all except the green spaces maps to
$Y_{1+-1}$.\qed

\begin{figure}[h]\centering
\includegraphics[scale=.3]{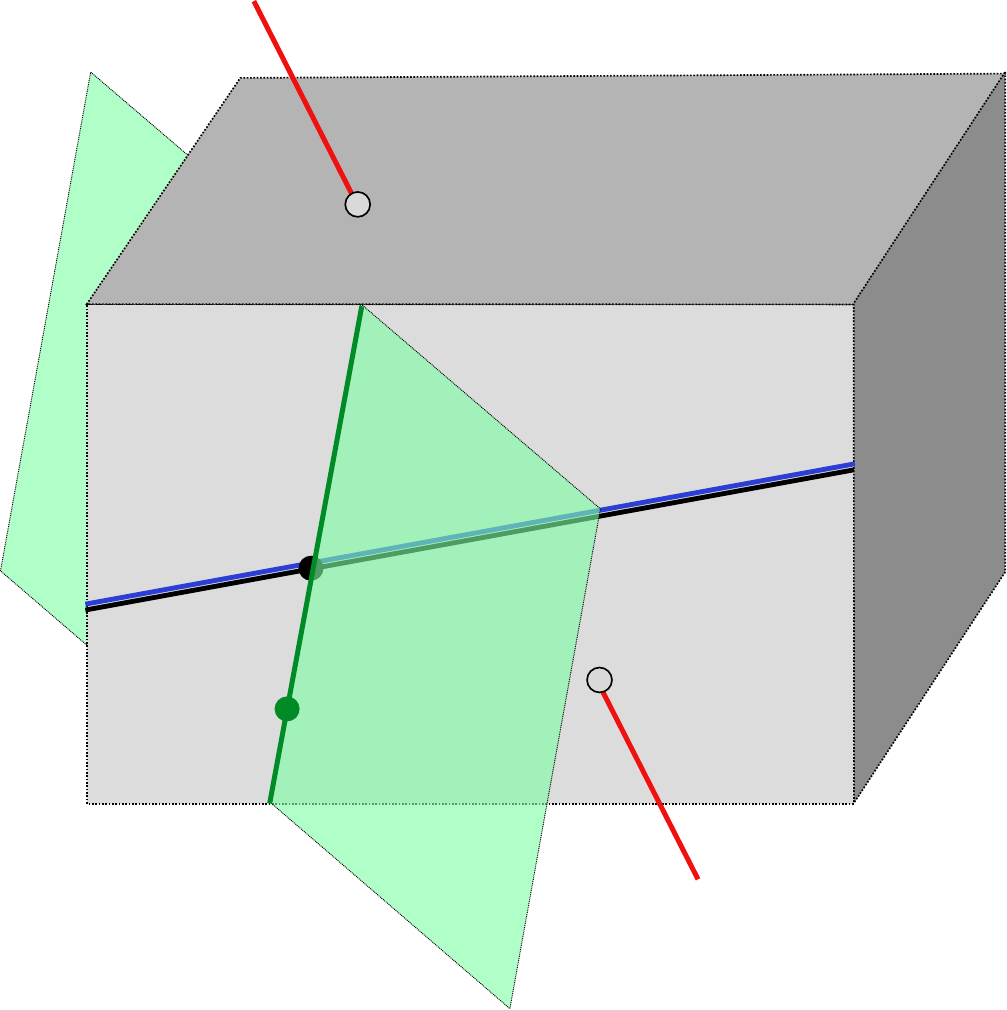} 
\caption{\label{fig:3} $\mathcal{BEM}^{Y_{++--},(1,3,2)}$: the green flag maps to $Y_{1+-1}$}
\put(-48,64){$E_2$}
\put(-116,147){$E^2$}
\put(-115,68){$F_1$}
\put(-134,90){$F_3$}
\put(-48,80){$F_2$}
\put(-120,44){$V_1$}
\put(-106,10){$V_2$}
\put(-111,85){$V_3$}
\end{figure}

\end{example}

\begin{example}
Let us compute the Poincar\'e polynomial $p_{Y_0,Q}(z)$ of $\bem$ when ${\sf K}={\sf GL}_p\times {\sf GL}_{q}$.
Recall that Corollary~\ref{cor: poincare} says that $p_{Y_0,Q}(z)=r_{Y_0}(z)(1+z)^N$, where $N$ is the length of $Q$.
Let 
$[n]_z!=[1]_z [2]_z\cdots [n]_z$
where $[i]_z=1+z+z^2+\cdots +z^{i-1}$.
The Poincar\'e polynomial of ${\sf GL}_n/{\sf B}$ is $[n]_z!$.
We now have $r_{Y_0}(z)=[p]_z![q]_z!$ for any choice of closed orbit $Y_0$.
This is since by Lemma~\ref{lemma:closedisflag}, we have
$Y_0\cong ({\sf GL}_p/{\sf B}) \times ({\sf GL}_q/{\sf B})$, combined with 
the K\"{u}nneth formula. 
\end{example}

The standard maximal torus ${\sf T}\cong (\mathbb{C}^*)^n$ in ${\sf GL}_n$ consists of invertible diagonal matrices.
There is a natural ${\sf K}$-action on $\bem$, described in Section~\ref{sec:BEManyG}, which induces an ${\sf S}$-action, where ${\sf S}={\sf T}\cap {\sf K}$.
Let us describe this action in the present setting. 
A matrix in ${\sf K}$ acts on the Grassmannian of $m$-dimensional subspaces of $\mathbb{C}^n$ by change of basis.
We extend this to an action of ${\sf K}$ on $\bem$ diagonally:
\[{\sf k}\cdot (F_1,F_2,\ldots,F_{n-1},V_1,V_2,\ldots V_N)
=({\sf k}\cdot F_1, {\sf k}\cdot F_2,\ldots {\sf k}\cdot F_{n-1},
{\sf k}\cdot V_1, {\sf k}\cdot V_2,\ldots, {\sf k}\cdot V_N),\]
where ${\sf k}\in{\sf K}$.

In Section~\ref{sec:furtherPQ} we study the moment polytope of $\bem$ for the pair $({\sf GL}_{p+q},{\sf GL}_p\times {\sf GL}_q)$.
To do so, we utilize the following concrete description of the ${\sf S}$-fixed points of $\bem$ for this symmetric pair.
Notice that in this case ${\sf S}={\sf T}$.
A \textbf{subword} of $Q=(j_1,\ldots,j_N)$ is a list 
$P=(\beta_1,\ldots,\beta_N)$ such that 
$\beta_i\in \{-, j_i\}$.
As we saw above, each letter of $Q$ corresponds to a quadrangle of the associated diagram. 
A subword $P$ corresponds to a subset of these quadrangles. 
Concretely, for each quadrangle in the set, require
the two vertices associated to vector spaces of equal dimension to be the same space.  For each quadrangle not in the set, insist those same vector spaces be different. Call such an assignment
given a left border associated to a flag $F_{\bullet}$ a \textbf{$\boldsymbol{P}$-growth of $\boldsymbol{F_{\bullet}}$}.

Given a matchless clan $\gamma$, a permutation $\sigma\in {\mathfrak S}_{p+q}$ is $\gamma$-{\bf shuffled} if it assigns
\begin{itemize}
\item $1,2,\ldots,p$ in any order to the $+$'s;
\item $p+1,p+2,\ldots,n$ in any order to the $-$'s.
\end{itemize}
Hence there are $p!q!$ such permutations (independent of $\gamma$).

Associated to any $\gamma$-shuffled permutation define $F_{\bullet}^{\gamma,\sigma}$
to be the $\sigma$-permuted coordinate flag, i.e., the one whose $d$-dimensional
subspace is $\langle \vec e_{\sigma(1)},\ldots\vec e_{\sigma(d)}\rangle$.

We will use this result, due to A.~Yamamoto:
\begin{proposition}[\cite{Yamamoto}]\label{prop:729yama}
The ${\sf S}$-fixed points of $Y_{\gamma}$ are flags  $F_{\bullet}^{\gamma,\sigma}$ where $\sigma\in {\mathfrak S}_{p+q}$
is $\gamma$-shuffled.
\end{proposition}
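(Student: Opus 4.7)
The plan is to directly verify the characterization using Theorem~\ref{thm:wyserbruhat} together with the standard fact that ${\sf T}$-fixed flags in ${\sf GL}_n/{\sf B}$ are exactly the coordinate flags $F_\bullet^\sigma$ defined by $F_i^\sigma=\langle \vec e_{\sigma(1)},\ldots,\vec e_{\sigma(i)}\rangle$ for $\sigma\in\mathfrak S_n$. Since in the running example ${\sf S}={\sf T}$, the ${\sf S}$-fixed points of $Y_\gamma$ are precisely those $F_\bullet^\sigma$ that satisfy the defining conditions of $Y_\gamma$. So the task reduces to determining for which $\sigma$ the coordinate flag $F_\bullet^\sigma$ lies in $Y_\gamma$.

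First I would specialize Theorem~\ref{thm:wyserbruhat} to a matchless clan $\gamma=c_1\cdots c_n$. Since no $c_i$ lies in $\mathbb{Z}_{>0}$, the invariants simplify to $\gamma(i;+)=\#\{j\le i: c_j=+\}$, $\gamma(i;-)=\#\{j\le i: c_j=-\}$, and $\gamma(i;j)=0$. Moreover $\gamma$ trivially avoids $1212$, so condition (3) of Theorem~\ref{thm:wyserbruhat} is redundant; $Y_\gamma$ is cut out entirely by the two intersection conditions with $E_p$ and $E^q$.

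Next I would compute the relevant dimensions for a coordinate flag. Directly,
\[
\dim(F_i^\sigma\cap E_p)=\#\{k\le i:\sigma(k)\le p\},\qquad
\dim(F_i^\sigma\cap E^q)=\#\{k\le i:\sigma(k)> p\}.
\]
The two inequalities $\dim(F_i^\sigma\cap E_p)\ge \gamma(i;+)$ and $\dim(F_i^\sigma\cap E^q)\ge \gamma(i;-)$ must hold for every $i\in[1,n]$. Since the left-hand sides sum to $i$ and the right-hand sides also sum to $i$ (because the number of $+$'s plus the number of $-$'s in $c_1\cdots c_i$ is exactly $i$), both inequalities are forced to be equalities for every $i$.

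Finally I would take successive differences. Equality at $i$ and $i-1$ for the $E_p$-condition gives
\[
[\sigma(i)\le p]=[c_i=+]
\]
for every $i$. In other words, $\sigma$ sends the positions of the $+$'s bijectively to $\{1,\ldots,p\}$ and the positions of the $-$'s bijectively to $\{p+1,\ldots,n\}$, with the orderings within each block unconstrained. This is precisely the definition of a $\gamma$-shuffled permutation, and conversely every such $\sigma$ produces a coordinate flag satisfying the equalities, hence lying in $Y_\gamma$. The main (and only) obstacle is the bookkeeping showing the two dimension inequalities force equality, but this is immediate from the prefix-sum identity $\gamma(i;+)+\gamma(i;-)=i$ available because $\gamma$ is matchless. \qed
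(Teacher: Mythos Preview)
Your argument is correct. The paper does not actually prove this proposition; it simply cites it from Yamamoto's work, so there is no ``paper's own proof'' to compare against.

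Your approach is a clean, self-contained verification using only ingredients already in the paper: the identification ${\sf S}={\sf T}$ in this case, the standard description of ${\sf T}$-fixed flags as coordinate flags, and the explicit inequalities of Theorem~\ref{thm:wyserbruhat} specialized to a matchless clan. The key observation---that for matchless $\gamma$ the two prefix counts $\gamma(i;+)$ and $\gamma(i;-)$ sum to $i$, forcing the defining inequalities to be equalities and hence pinning down $[\sigma(i)\le p]=[c_i=+]$ position by position---is exactly right and makes the proof entirely elementary. This is arguably preferable to a black-box citation, since it shows the statement follows immediately from Theorem~\ref{thm:wyserbruhat} with no additional input.
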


\begin{proposition}[${\sf S}$-fixed points of $\bem$ for $({\sf GL}_{p+q},{\sf GL}_p\times {\sf GL}_q)$]
\label{prop:fixedpoints727}
The set of ${\sf S}$-fixed points of $\bem$ correspond to 
$P$-growth diagrams whose initial vertical chain is $F_{\bullet}^{\gamma,\sigma}$ (where $Y=Y_\gamma$). 
\end{proposition}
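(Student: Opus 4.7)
The plan is to identify the $\sf S$-fixed points of $\bem$ with configurations in which every labeled vector space is a coordinate subspace, and then to show that these are parametrized exactly by the data of a $\gamma$-shuffled permutation $\sigma$ together with a subword $P$ of $Q$, matching the definition of a $P$-growth.

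First I would observe that in this setting ${\sf S} = {\sf T}$, the full diagonal torus of ${\sf GL}_n$, and it acts diagonally on all vector spaces of the configuration. Hence a point of $\bem$ is ${\sf S}$-fixed if and only if every subspace attached to a vertex of the diagram is ${\sf T}$-fixed in $\C^n$, i.e., a coordinate subspace $\langle \vec e_{i_1},\ldots,\vec e_{i_d}\rangle$. Applied to the initial (left) chain, this forces the initial flag to be a ${\sf T}$-fixed flag in $Y_\gamma$; by Proposition~\ref{prop:729yama} these are exactly the flags $F_\bullet^{\gamma,\sigma}$ for $\gamma$-shuffled $\sigma$, which supplies the parameter $\sigma$ in the claimed bijection.

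Next I would process the quadrangles of the diagram in the order they were built, i.e.~for $k=N,N-1,\ldots,1$. The inductive hypothesis is that at step $k$ all vertices of the current right border are coordinate subspaces. When $V_k$ is added, it must satisfy $B_k \subset V_k \subset T_k$, where $B_k$ (dim $j_k-1$) and $T_k$ (dim $j_k+1$) are the neighbors of $V_k$ on the current right border, and $L_k$ (dim $j_k$) is the vertex that $V_k$ replaces. Because $\dim T_k - \dim B_k = 2$ and $B_k, T_k$ are coordinate subspaces, we can write $T_k = B_k \oplus \langle \vec e_a, \vec e_b\rangle$ for the two standard basis vectors in $T_k\setminus B_k$. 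The coordinate subspaces of dimension $j_k$ strictly between $B_k$ and $T_k$ are therefore exactly $B_k + \langle \vec e_a\rangle$ and $B_k + \langle \vec e_b\rangle$; one of these is $L_k$, and the other I call $L_k'$. Thus $V_k\in\{L_k,L_k'\}$, and the inductive hypothesis is preserved.

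Finally I would match the two local options to the subword data: the choice $V_k=L_k$ agrees with the definition of a colored quadrangle in a $P$-growth (the two dim-$j_k$ vertices of the $k$-th quadrangle coincide), while $V_k=L_k'$ agrees with the uncolored case (they differ). Letting $P$ be the subword of $Q$ whose entries record the colored quadrangles, this sets up a bijection between ${\sf S}$-fixed configurations and triples $(\gamma,\sigma,P)$, realized uniquely as the $P$-growth of $F_\bullet^{\gamma,\sigma}$ consisting entirely of coordinate subspaces. The only genuine point to check is the local dichotomy in the inductive step, and this is immediate from the rigidity of the coordinate-subspace lattice: a length-two coordinate flag $B_k\subset T_k$ admits exactly two coordinate subspaces strictly in between.
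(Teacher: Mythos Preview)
Your proposal is correct and follows essentially the same approach as the paper: both use Yamamoto's description of the ${\sf S}$-fixed points of $Y_\gamma$ for the initial chain, and then argue that a coordinate initial flag together with a subword $P$ determines a unique $P$-growth in coordinate subspaces. The paper packages the inductive dichotomy you spell out as a one-line ``straightforward'' claim (Claim~\ref{lemma:unique}), whereas you make explicit the key local fact that a coordinate flag $B_k\subset T_k$ of corank two admits exactly two intermediate coordinate subspaces; this is the content the paper leaves to the reader.
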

\begin{proof}
The following is straightforward:

\begin{claim}
\label{lemma:unique}
Fix a coordinate flag $F_{\bullet}$ for the initial vertical chain, i.e.~$F_{\bullet}\in ({\sf G}/{\sf B})^{\sf T}$.  There exists exactly one $P$-growth of $F_{\bullet}$ which uses only subspaces of the form $\langle \vec e_{i_1},\ldots\vec e_{i_d}\rangle$.
\end{claim}

Clearly any such $P$-growth of $F_{\bullet}$ is an ${\sf S}$-fixed point of $\bem$. Conversely, consider any ${\sf S}$-fixed point of $\bem$ and its corresponding diagram. The left border is an ${\sf S}$-fixed point
of $Y=Y_\gamma$. The result then holds by Proposition~\ref{prop:729yama} together with Claim~\ref{lemma:unique}.
\end{proof}

Proposition~\ref{prop:thefixed} gives a general form of Proposition~\ref{prop:fixedpoints727}.

\begin{corollary}
\label{cor:fixedpointcount}
$\#\left( \bem \right)^{\sf S}=p!q!2^{|Q|}$.
\end{corollary}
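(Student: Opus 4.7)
The plan is to directly enumerate the ${\sf S}$-fixed points using the structural description given by Proposition~\ref{prop:fixedpoints727}. That proposition identifies each ${\sf S}$-fixed point of $\bem$ with the data of (i) a coordinate flag $F_\bullet^{\gamma,\sigma}$ on the initial vertical chain and (ii) a $P$-growth of this flag, for some subword $P$ of $Q$ that uses only coordinate subspaces. So the count factors as (\#flags)$\,\times\,$(\#subwords)$\,\times\,$(\#completions).

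First I would count the coordinate flags. Since $Y_0 = Y_\gamma$ is fixed and $\gamma$ is matchless, Proposition~\ref{prop:729yama} gives that the ${\sf S}$-fixed flags in $Y_\gamma$ are exactly the $F_\bullet^{\gamma,\sigma}$ indexed by $\gamma$-shuffled permutations $\sigma \in \mathfrak{S}_{p+q}$. A $\gamma$-shuffled permutation is determined by an arbitrary assignment of $\{1,\ldots,p\}$ to the $+$-positions of $\gamma$ and an arbitrary assignment of $\{p+1,\ldots,p+q\}$ to the $-$-positions, yielding exactly $p!\,q!$ choices.

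Next I would count the subwords. A subword $P=(\beta_1,\ldots,\beta_N)$ of $Q=(j_1,\ldots,j_N)$ is defined by choosing, for each index $i$, whether $\beta_i=j_i$ or $\beta_i=-$; this gives $2^N = 2^{|Q|}$ possibilities. Finally, Claim~\ref{lemma:unique} established within the proof of Proposition~\ref{prop:fixedpoints727} asserts that once the coordinate flag on the left border and the subword $P$ are fixed, there is exactly one $P$-growth of $F_\bullet^{\gamma,\sigma}$ using only coordinate subspaces, so the third factor contributes $1$.

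Multiplying these three factors yields $\#(\bem)^{\sf S} = p!\,q!\cdot 2^{|Q|}$. There is no real obstacle here: the content of the corollary is packaged entirely in Proposition~\ref{prop:fixedpoints727} and its internal Claim~\ref{lemma:unique}, and the corollary is simply the numerical shadow of that bijection.
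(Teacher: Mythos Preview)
Your argument is correct and is exactly the counting implicit in the paper: the corollary is an immediate consequence of Proposition~\ref{prop:fixedpoints727} together with Claim~\ref{lemma:unique}, and you have spelled out precisely the factorization $p!\,q!\cdot 2^{|Q|}\cdot 1$ that the paper intends.
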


Similar descriptions for the torus fixed points can be given for the other two symmetric
pairs of the form $({\sf GL}_n, {\sf K})$. In these cases ${\sf T}\neq {\sf S}$, however it is known that the fixed points in the respective flag varieties agree (see \cite[pg.~128]{Brion1999}). In brief,  
in the case 
$({\sf G},{\sf K})=({\sf GL}_{2n},{\sf Sp}_{2n})$, as elements of ${\mathfrak S}_{2n}$, these 
${\sf S}$-fixed points correspond to ``mirrored'' permutations, i.e. 
those permutations $w$ having the property that 
$w(2n+1-i) = 2n+1-w(i)$ 
for each $i$; this is described in detail in \cite{Wyser-13}.  Similarly, 
in the case of 
$({\sf G},{\sf K})=({\sf GL}_n,{\sf O}_n)$, these fixed points correspond to mirrored elements of of ${\mathfrak S}_n$, as described in \cite{Wyser-13}.

In \cite{EPTY-16+} one considers Bott-Samelson varieties in relation to zonotopal tilings of an \emph{Eltnitsky} polygon. This puts a poset structure
on Bott-Samelson varieties (in type $A$) by introducing 
generalized Bott-Samelson varieties for which the 
fibers are larger flag varieties rather than ${\mathbb P}^1$'s. The diagram definition of $\bem$ permits one to obtain similar definitions and results here \emph{mutatis mutandis}.

\section{Moment polytopes for the ${\sf GL}_p\times {\sf GL}_q$ case}\label{sec:furtherPQ}

Recall from the end of Section~\ref{sec:mps} that the BEM polytope ${\mathcal P}_{Y_0,Q}$ denotes the moment polytope $\Phi(\mathcal{BEM}^{Y_0,Q})$.

\begin{example}\label{ex:runningpoly} Let
$Q=(3,2)$  and $Y_0=Y_{++--}$. 
Following the construction in Section~\ref{sec:A-BEMs}, 
and applying Theorem~\ref{thm:wyserbruhat},
$\bem$ is described by the following diagram.

\begin{figure}[h]\centering
  \begin{tikzpicture}
\node (top) at (0,0) {$\C^4$};
    \node [below of=top] (flag3)  {$W_3$};
    \node [below of=flag3] (flag2) {$E_2$};
    \node [below of=flag2] (flag1) {$W_1$};
    \node [right of=flag2] (L1) {$V_2$};
        \node [right of=flag3] (P1) {$V_1$};
\node [below of=flag1] (down) {$\C^0$};
\draw [thick,shorten <=-2pt] (top) -- (flag3);
\draw [thick,shorten <=-2pt] (flag3) -- (flag2);
\draw [thick,shorten <=-2pt] (flag2) -- (flag1);
\draw [thick,shorten <=-2pt] (P1) -- (L1);
\draw [thick,shorten <=-2pt] (top) -- (P1);
\draw [thick,shorten <=-2pt] (flag3) -- (L1);
\draw [thick,shorten <=-2pt] (flag1) -- (down);
\draw [thick,shorten <=-2pt] (L1) -- (flag1);
\end{tikzpicture}
\end{figure}

By Corollary~\ref{cor:fixedpointcount}, $\bem$ has $4\cdot 2^2=16$ ${\sf S}$-fixed points. We apply Theorem~\ref{thm:moment} to construct the moment polytope.
First, by \eqref{eq:bsmoment}, $\Phi(BS^Q)$ is the convex hull of the following points:
\begin{center}
\begin{center}
\begin{tabular}{ |c|c| } 
 \hline
 $({\gb_1},{\gb_2})$ & $\displaystyle{\sum_{i=1}^r\omega_i+\sum_{i={|Q|}}^{1}s_{\beta_{|Q|}}\cdots s_{\beta_i} \omega_i}$  \\ 
  \hline
 $(-,-)$ & $(5,4,2,0)=(3,2,1,0)+s_-\cdot(1,1,0,0)+s_-s_-\cdot(1,1,1,0)$ \\ 
 $(3,-)$ & $(5,4,1,1)=(3,2,1,0)+s_-\cdot(1,1,0,0)+s_-s_3\cdot(1,1,1,0)\ $  \\
$(-,2)$ & $(5,3,3,0)=(3,2,1,0)+s_2\cdot(1,1,0,0)\ +s_2s_-\cdot(1,1,1,0)\ $  \\
$(3,2)$ & $(5,2,3,1)=(3,2,1,0)+s_2\cdot(1,1,0,0)\ +s_2s_3\cdot(1,1,1,0)\ \ $  \\
 \hline
\end{tabular}
\end{center}
\end{center}
The polytope $\Phi(BS^Q)$ is the white quadrilateral in Figure~\ref{fig:1}. We consider the reflections of $\Phi(BS^Q)$
 by the ${\sf T}$-fixed points of $Y_0$, corresponding to the
$++--$ shuffled permutations:
\[[1,2,3,4], \ [2,1,3,4], \ [1,2,4,3], \text{ and $[2,1,4,3]$.}\] 
By Theorem~\ref{thm:moment}, ${\mathcal P}_{Y_0,Q}$ is the convex hull of the following reflections
\begin{align*}[1,2,3,4]\cdot\Phi(BS^Q)=&{\sf conv}\{(5,4,2,0),(5,4,1,1),(5,3,3,0),(5,2,3,1)\},\\
[2,1,3,4]\cdot\Phi(BS^Q)=&{\sf conv}\{(4,5,2,0),(4,5,1,1),(3,5,3,0),(2,5,3,1)\},\\
[1,2,4,3]\cdot\Phi(BS^Q)=&{\sf conv}\{(5,4,0,2),(5,4,1,1),(5,3,0,3),(5,2,1,3)\}, \text{ and}\\
[2,1,4,3]\cdot\Phi(BS^Q)=&{\sf conv}\{(4,5,0,2),(4,5,1,1),(3,5,0,3),(2,5,1,3)\}.
\end{align*}

\begin{figure}[h]\centering
\includegraphics{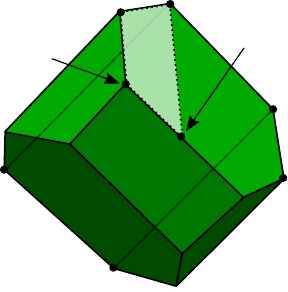} 
\put(-70,144){{\footnotesize$id,(3,2)$}}
\put(-157,110){{\footnotesize$id,(-,-)$}}
\put(-25,120){{\footnotesize$id,(3,-)$}}
\put(-127,135){{\footnotesize$id,(-,2)$}}
\put(3,52){{\footnotesize$[1,2,4,3],(-,2)$}}
\put(-2,85){{\footnotesize$[1,2,4,3],(3,2)$}}
\put(-207,55){{\footnotesize$[2,1,3,4],(3,2)$}}
\put(-154,5){{\footnotesize$[2,1,4,3],(3,2)$}}
\caption{\label{fig:1} ${\mathcal P}_{Y_0,Q}$ for $Y_0=Y_{++--}$ and 
$Q=(3,2)$ is the convex hull of four reflections in ${\mathbb R}^3$ of the Bott-Samelson polytope (white). We have labelled some of the points $\Phi(p_{(x,J)})$ using $x,J$; all other points can be inferred from these.}
\end{figure}
\end{example}

By the discussion of Section~\ref{sec_typeA_BEM}, the number of choices of closed orbits $Y_0=Y_{\gamma}$ equals the number of matchless clans in ${\tt Clans}_{p,q}$ and this number is  ${p+q\choose p}$.
However, as we verify in the following theorem, if we fix $Q$ and let $Y_\gamma$ vary, then all the BEM-polytopes are isometric, being reflections of one other.

\begin{theorem}[Reduction to $+\cdots +-\cdots -$ case]\label{prop:theone} 
${\mathcal P}_{Y_{\gamma},Q}$ 
is a $w$-reflection of ${\mathcal P}_{Y_{+\ldots+-\ldots-},Q}$ where $w$ is the smallest permutation such that 
$w\cdot (+\ldots+-\ldots-)=\gamma$.
\end{theorem}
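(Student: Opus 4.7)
The plan is to apply Theorem~\ref{thm:moment} in combination with Proposition~\ref{prop:729yama} to write both $\mathcal{P}_{Y_\gamma, Q}$ and $\mathcal{P}_{Y_{\gamma_0}, Q}$ (with $\gamma_0 := +\ldots+-\ldots-$) as convex hulls of Weyl-translates of the Bott-Samelson polytope $\Phi_{\sf S}(BS^Q)$, and then to identify the parametrizing sets of ${\sf S}$-fixed points as cosets of ${\mathfrak S}_p\times{\mathfrak S}_q$ related by $w$.

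First, I would combine Theorem~\ref{thm:moment} with Proposition~\ref{prop:729yama} to obtain
\[
\mathcal{P}_{Y_\gamma, Q} \;=\; {\sf conv}\bigcup_{\sigma\in\mathcal{S}_\gamma}\sigma\cdot\Phi_{\sf S}(BS^Q),
\]
where $\mathcal{S}_\gamma\subseteq{\mathfrak S}_n$ is the set of $\gamma$-shuffled permutations, identified with the associated ${\sf T}$-fixed points of $Y_\gamma$ acting on $\mathfrak{t}^* = \mathfrak{s}^*$ (using that ${\sf T} = {\sf S}$ for this symmetric pair). The analogous formula holds for $\gamma_0$, with $\mathcal{S}_{\gamma_0} = {\mathfrak S}_p\times{\mathfrak S}_q$, since the positions of $+$ and $-$ in $\gamma_0$ are the standard blocks.

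Next, I would verify the coset identity $\mathcal{S}_\gamma = ({\mathfrak S}_p\times{\mathfrak S}_q)\cdot w$. Since $w\cdot\gamma_0 = \gamma$, the positions of $+$ in $\gamma$ are the $w$-images of $\{1,\ldots,p\}$, and the defining condition for $\sigma$ to be $\gamma$-shuffled rearranges directly into $\sigma w^{-1}\in{\mathfrak S}_p\times{\mathfrak S}_q$. The minimality of $w$ ensures it is the canonical $(p,q)$-shuffle representative of the coset and is itself $\gamma$-shuffled, giving a basepoint. Substituting and using associativity of the Weyl action then yields
\[
\mathcal{P}_{Y_\gamma, Q} \;=\; {\sf conv}\bigcup_{\sigma''\in{\mathfrak S}_p\times{\mathfrak S}_q}\sigma''\cdot\bigl(w\cdot\Phi_{\sf S}(BS^Q)\bigr),
\]
which presents $\mathcal{P}_{Y_\gamma, Q}$ as the ${\mathfrak S}_p\times{\mathfrak S}_q$-orbit convex hull of the $w$-translated Bott-Samelson seed, i.e., the $w$-reflection of $\mathcal{P}_{Y_{\gamma_0}, Q}$.

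The main obstacle will be the bookkeeping in the coset identification: one must track the correct side of multiplication, align the minimality hypothesis on $w$ with the parametrization of Proposition~\ref{prop:729yama}, and confirm that the resulting formula matches the intended meaning of ``$w$-reflection'' here (the Bott-Samelson seed is transported by $w$ and the same ${\mathfrak S}_p\times{\mathfrak S}_q$-orbit is then formed, rather than $w$ acting globally on the full polytope). Once the coset identity is settled, the conclusion follows by a formal change of indexing in the moment polytope formula, using linearity of the Weyl action on $\mathfrak{t}^*$.
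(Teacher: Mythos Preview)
Your approach is essentially the paper's: both arguments reduce to identifying the set $\mathcal{S}_\gamma$ of $\gamma$-shuffled permutations as a right translate of $\mathfrak{S}_p\times\mathfrak{S}_q$ and then invoking Theorem~\ref{thm:moment}. The only difference is organizational. The paper proceeds inductively, taking a single adjacent swap $-+\mapsto+-$ at positions $i,i+1$ (transforming $\gamma$ into $\gamma'$), proving the one-step claim $\mathcal{S}_\gamma\, s_i=\mathcal{S}_{\gamma'}$, and iterating down to $+\cdots+-\cdots-$; you instead establish the coset identity $\mathcal{S}_\gamma=(\mathfrak{S}_p\times\mathfrak{S}_q)\cdot w$ in one stroke. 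Your global version is a bit cleaner and makes the role of the minimal coset representative explicit, while the paper's inductive step makes the ``reflection'' language visibly apt at each stage. The caveat you raise in your last paragraph---that the coset identity is by \emph{right} multiplication, so one literally obtains the $\mathfrak{S}_p\times\mathfrak{S}_q$-orbit of the $w$-translated Bott--Samelson seed rather than $w$ acting globally on the finished polytope---is well placed, and it applies equally to the paper's argument, which passes over the same point with the sentence ``the respective polytopes must be an $s_i$ reflection of one another.''
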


\begin{proof} 
Suppose $\gamma\in {\tt Clans}_{p,q}$ is matchless and there
exists an $i$ such that $\gamma_i=-$ and $\gamma_{i+1}=+$.
Let $\gamma'\in {\tt Clans}_{p,q}$ be obtained by interchanging
$-+\mapsto +-$ at those positions.

By Proposition~\ref{prop:729yama}, the ${\sf T}$-fixed points 
of $Y_{\gamma}$ are the $\gamma$-shuffled permutations; call this set ${\mathcal A}$. Similarly, the ${\sf T}$-fixed points of $Y_{\gamma'}$
are the $\gamma'$-shuffled permutations; call this set ${\mathcal B}$. 

\begin{claim}
${\mathcal A}s_i={\mathcal B}$.
\end{claim}
\noindent
\emph{Proof of claim:}
Let $\sigma\in {\mathcal A}$. Since $\gamma_i=-$, by definition 
$\sigma(i)\in \{p+1,p+2,\ldots,n\}$. Also, since $\gamma_{i+1}=+$,
$\sigma(i+1)\in \{1,2,\ldots,p\}$. Thus if $\sigma'=\sigma s_i$ then
$\sigma'(i)\in \{1,2,\ldots,p\}$ and $\sigma'(i+1)\in \{p+1,p+2,\ldots,n\}$, as is required for $\sigma'\in {\mathcal B}$. The claim follows.
\qed

The claim, combined with Proposition~\ref{prop:fixedpoints727}
imply that the ${\sf T}$-fixed points of $\mathcal{BEM}^{Y_{\gamma'},Q}$ are the $s_i$-reflection of those of 
$\mathcal{BEM}^{Y_\gamma,Q}$. Since the moment map images are determined by these ${\sf T}$-fixed points, the respective
polytopes must be an $s_i$ reflection of one another.
Now iterate this process down to 
the case $+\cdots +-\cdots -$.
\end{proof}

The Table~\ref{table:someinfo} summarizes some information about the resulting polytopes for $p=q=2$. In view of Theorem \ref{prop:theone}, we only need to consider $\gamma=++--$. We have restricted to $Q$ reduced and $|Q|\leq 3$ for brevity.
Actually, based on such calculations, it seems true that if $Q$ and $Q'$ are Demazure words for the same $w$ then the BEM polytopes
are combinatorially equivalent. For example, $Q=(1),(1,1),(1,1,1)$ are all two dimensional with $(V,E,F)=(4,4,1)$. However, we have no proof of this at present.

\begin{table}[t]

\begin{tabular}{ |c|c|c|c|c|c| }
 \hline 
$Q$ & dim & $V$ & $E$ & $F$  \\ 
\hline\hline
$(1)$ & $2$ & $4$ & $4$ & $1$\\

\hline 
$(2)$ & $3$ & $8$ &	$12$ &	$6$ \\ 

\hline
$(3)$ & $2$ & $4$ & $4$ & $1$\\
 \hline\hline
 
 $(1,2)$ & $3$ & $12$ & $18$ & $8$\\
 \hline

 $(1,3)\equiv (3,1)$ & $2$ & $4$ & $4$ & $1$\\
 \hline

 $(2,1)$ & $3$ & $8$ & $12$ & $6$\\
 \hline
 
  $(2,3)$ & $3$ & $8$ & $12$ & $6$\\
 \hline
  $(3,2)$ & $3$ & $12$ & $18$ & $8$\\
  
 \hline\hline
 
  $(1,2,1)\equiv(2,1,2)$ & $3$ & $12$ & $18$ & $8$\\
  
 \hline
   $(1,2,3)$ & $3$ & $12$ & $18$ & $8$\\
 \hline
  $(1,3,2)\equiv (3,1,2)$ & $3$ & $8$ & $12$ & $6$\\
\hline

   $(2,1,3)\equiv (2,3,1)$ & $3$ & $8$ & $12$ & $6$\\
 \hline 
 
    $(2,3,2)$ & $3$ & $12$ & $18$ & $8$\\
     \hline  
    $(3,2,1)$ & $3$ &$12$ & $18$ & $8$\\
 \hline   
     $(3,2,3)$ & $3$ &$12$ & $18$ & $8$\\
 \hline
\end{tabular}
\caption{\label{table:someinfo} BEM polytope data for $({\sf GL}_4, {\sf GL_2}\times {\sf GL}_2)$ where $Q$ is reduced and $|Q|\leq 3$}
\end{table}

Following \cite[Section 5]{Tymo}, let $X$ be a projective algebraic variety with a torus action ${\sf T}$. Suppose
$p\in X^{\sf T}$. 
Let $T_p(X)$ be the tangent space; this too carries a ${\sf T}$ action
and a ${\sf T}_{\R}$ action.
The ${\sf T}_{\R}$-decomposition is
$T_p(X)=\bigoplus_{\alpha} E_{\alpha}$,
where $E_{\alpha}$ are the eigenspaces with 
eigenvalues $\alpha\in {\mathfrak t}^*$. These $\{\alpha\}$ are the
{\bf ${\sf T}$-weights}. 
The nonnegative cone spanned by these ${\sf T}$-weights of $T_p(X)$
is equal to 
the cone spanned by the edges of the moment polytope $\Phi(X)$ 
incident to $\Phi(p)$.

For $w=s_{j_1}s_{j_2}\cdots s_{j_\ell}$ a reduced expression of $w$ we define $$\text{inv}(w):=\{\alpha_{j_1},s_{j_1}(\alpha_{j_2}),\ldots,s_{j_1}s_{j_2}\cdots s_{j_{\ell-1}}(\alpha_{j_\ell})\}.$$

\begin{theorem}[Combinatorial description of ${\sf T}$-weights]
\label{prop:tweights}
Let $Q=(j_1,j_2,\ldots,j_N)$ be a word and $J=(\beta_1,\ldots,\beta_N)$ be a subword of $Q$. 
The ${\sf T}$-weights of the tangent space of $\mathcal{BEM}^{Y_{+\ldots+-\ldots-},Q}$ at $p_{(u{\sf B}, J)}$, where $u{\sf B}$ is a ${\sf T}$-fixed point of $Y_{+\ldots+-\ldots-}$, are
$$u\cdot(-\text{inv}(w))\cup u\cdot\{s_{\beta_N}\cdot(-\alpha_{j_N}),s_{\beta_N}s_{\beta_{N-1}}\cdot(-\alpha_{j_{N-1}}),\ldots,s_{\beta_N}\cdots s_{\beta_1}\cdot(-\alpha_{j_1})\},
$$
where $w=[p,p-1,\ldots,1,n,n-1,\ldots,p+1]$.
\end{theorem}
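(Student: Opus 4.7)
The plan is to exploit the iterated $\mathbb{P}^1$-bundle structure from Proposition~\ref{prop:P1bundle}. Since each fibration in the tower is ${\sf T}$-equivariant and $p_{(u{\sf B},J)}$ projects to a ${\sf T}$-fixed point at every stage, the short exact sequences of tangent spaces split as ${\sf T}$-representations (${\sf T}$ is reductive), yielding
\[
T_{p_{(u{\sf B},J)}}\bem \;\cong\; T_{u{\sf B}}Y_0 \;\oplus\; \bigoplus_{i=1}^N T_i,
\]
where $T_i$ denotes the tangent space at $p_{(u{\sf B},J)}$ of the $i$-th $\mathbb{P}^1$-fiber, namely $x_{i+1}{\sf P}_{j_i}/{\sf B}$ in the notation of the proof of Proposition~\ref{prop:P1bundle}. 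I would then compute the weights of the base and of each fiber separately.

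For the base: by Lemma~\ref{lemma:closedisflag}, $Y_0\cong{\sf K}/{\sf B}'$ with ${\sf K}={\sf GL}_p\times{\sf GL}_q$, whose Weyl group ${\sf W}_{\sf K}={\mathfrak S}_p\times{\mathfrak S}_q\subset{\mathfrak S}_n$ has longest element exactly $w=[p,p-1,\ldots,1,n,n-1,\ldots,p+1]$. A direct check shows that $\mathrm{inv}(w)$ is the set of positive roots of ${\sf K}$, so $-\mathrm{inv}(w)$ is precisely the weight set of $T_{e{\sf B}'}({\sf K}/{\sf B}')\cong\mathfrak{k}/\mathfrak{b}'$. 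Translating by $u$ (which is $\gamma$-shuffled for $\gamma=+\cdots+-\cdots-$, hence lies in $N_{\sf K}({\sf T})$) then produces the first asserted piece $u\cdot(-\mathrm{inv}(w))$ at the ${\sf T}$-fixed point $u{\sf B}$.

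For the fibers: the $i$-th coordinate at $p_{(u,J)}$ is $x_i=us_{\beta_N}\cdots s_{\beta_i}$ (with the convention $s_-=\mathrm{id}$), so within the fiber $x_{i+1}{\sf P}_{j_i}/{\sf B}$ this point is either $x_{i+1}{\sf B}$ (if $\beta_i=-$) or $x_{i+1}s_{j_i}{\sf B}$ (if $\beta_i=j_i$). The tangent weight of ${\sf P}_{j_i}/{\sf B}$ at $e{\sf B}$ is the negative simple root $-\alpha_{j_i}$, and left-translation by $y\in N_{\sf G}({\sf T})$ shifts the tangent weight at $y{\sf B}$ to $y\cdot(-\alpha_{j_i})$. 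Thus $T_i$ has weight $x_{i+1}\cdot(-\alpha_{j_i})$ in the first case and $x_{i+1}s_{j_i}\cdot(-\alpha_{j_i})=x_{i+1}\cdot\alpha_{j_i}$ in the second. Using the identity $s_{j_i}(-\alpha_{j_i})=\alpha_{j_i}$, both cases collapse uniformly to
\[
us_{\beta_N}s_{\beta_{N-1}}\cdots s_{\beta_i}\cdot(-\alpha_{j_i}),
\]
matching the theorem.

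The main obstacle is essentially bookkeeping, in two pieces: first, correctly identifying which $\mathbb{P}^1$-fiber in the tower corresponds to which letter $j_i$ of $Q$ (and at which of the two ${\sf T}$-fixed points of that fiber $p_{(u,J)}$ sits, as a function of $\beta_i$); and second, verifying that the two cases $\beta_i\in\{-,j_i\}$ really do collapse into the single formula involving $s_{\beta_i}$. Beyond this, the argument is a standard synthesis of the ${\sf T}$-equivariant splitting above, the identification of negative roots as tangent weights of $G/B$-type flag varieties at the identity coset, and the Weyl-group twist of weights under left translation to the other ${\sf T}$-fixed points.
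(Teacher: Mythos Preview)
Your argument is correct and lands on the same formula, but the route differs from the paper's. The paper works on the $\mathcal{BE}^{Y_0,Q}$ side: it observes that for $Y_0=Y_{+\cdots+-\cdots-}$ the preimage $\widetilde{Y_0}$ is the maximal parabolic ${\sf P}_{\widehat p}$, so $\mathcal{BE}^{Y_0,Q}$ has the shape required by a cited result of Graham--Zierau (their Corollary~3.11), which directly outputs the tangent weights at $[u,s_{\beta_N},\ldots,s_{\beta_1}]$ as the multiset union of the translated weights of each quotient factor. The paper then transports this back to $\bem$ via Theorem~\ref{thm:main}(I). You instead stay on the $\bem$ side and use the iterated $\mathbb{P}^1$-bundle of Proposition~\ref{prop:P1bundle}, splitting the ${\sf T}$-equivariant short exact sequences of tangent spaces by complete reducibility and computing the base and fiber weights by hand; your identification of $-\mathrm{inv}(w)$ with the negative roots of ${\sf K}={\sf GL}_p\times{\sf GL}_q$ via $Y_0\cong{\sf K}/{\sf B}'$ is exactly right, as is the collapse of the two cases $\beta_i\in\{-,j_i\}$ via $s_{j_i}(-\alpha_{j_i})=\alpha_{j_i}$. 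The two arguments are morally the same decomposition---Graham--Zierau is precisely the general statement that such towers have additively computed tangent weights---but your version is self-contained and avoids both the external citation and the passage through the isomorphism of Theorem~\ref{thm:main}(I), at the cost of redoing a special case of that lemma.
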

\begin{proof}
We apply:
 \begin{theorem}\cite[Corollary 3.11]{Graham-Zierau}\label{thm:GZ} Let $Q_0,\ldots,Q_n$ be subgroups of an algebraic group $G$ and let $T$ be a torus in $G$. Suppose that $R_0,\ldots,R_n$ are subgroups of $G$ with $R_i\subset Q_{i-1}\cap Q_i$ for $i>0$ and $R_0\subset Q_0$. Let 
\[X=Q_n\times^{R_n}Q_{n-1}\times^{R_{n-1}}\cdots\times^{R_2}Q_1\times^{R_1}Q_0/R_0\] 
and $[q_n,\ldots,q_0]\in X$ a $T$-fixed point. Assume in addition that for every $i$, $q_i^{-1}\cdots q_n^{-1}$ is in the normalizer of $T$. Then the weights of $T$ acting on the tangent space $T_{[q_n,\ldots,q_0]}X$ is the multiset union of
 $$
 q_nq_{n-1}\cdots q_i \cdot \{\text{weights of } T \text{ acting on } Q_i/R_i\}
 $$
 where $i$ runs from $n$ to $0$.
 \end{theorem}
 
 More precisely, we apply this result to $\sf T$ and
\[{\mathcal B}{\mathcal E}^{Y_0,Q} = \widetilde{Y_0} \times^{\sf B} {\sf P}_{{j_N}} \times^{\sf B} {\sf P}_{{j_{N-1}}} \times^{\sf B} \hdots \times^{\sf B} {\sf P}_{{j_1}}/{\sf B},\] where $\widetilde{Y_0}$ is the preimage of $Y_0=Y_{+\ldots+-\ldots-}$ in ${\sf G}$ under ${\sf G}\rightarrow{\sf G}/{\sf B}$.

Let us verify that ${\mathcal B}{\mathcal E}^{Y_0,Q}$ satisfies the required hypotheses.
The orbit 
$$Y_{+\ldots+-\ldots-}=\{\mathbb{C}^0\subset F_1\subset \cdots\subset F_{p-1}\subset \mathbb{C}^p\subset F_{p+1}\subset \cdots\subset \mathbb{C}^{p+q}\}.
$$
Thus $\widetilde{Y_0}$ is the maximal parabolic subgroup $P_{\widehat p}$.
We have that $\widetilde{Y_0}, {\sf P}_{{j_N}},\hdots,  {\sf P}_{{j_1}}$ are subgroups of ${\sf GL}_n$.
Since ${\sf B}$ is a Borel subgroup then ${\sf B}\subset 
\widetilde{Y_0}\cap{\sf P}_{j_{N}}$
and 
${\sf B}\subset {\sf P}_{j_{t-1}}\cap{\sf P}_{j_{t}}$ 
for $1\leq t\leq N$.

The ${\sf T}$-fixed point of ${\mathcal B}{\mathcal E}^{Y_0,Q}$ corresponding to $p_{(u{\sf B},J)}$ is $[u,s_{\beta_{N}},s_{\beta_{N-1}},\ldots,s_{\gb_1}{\sf B}]$, where $u\in N({\sf T})$. Therefore $(us_{\beta_{N}}s_{\beta_{N-1}}\ldots s_{\gb_i})^{-1}$ is in the normalizer of ${\sf T}$ for all $i$.
We have now verified that ${\mathcal B}{\mathcal E}^{Y_0,Q}$ satisfies the required hypotheses. 

Since $Y_0$ is the Schubert variety for $w$, the ${\sf T}$-weights of $\widetilde{Y_0}/{\sf B}=Y_0$ at ${\sf B}$ are the negatives of the inversions of $w$.
The ${\sf T}$-weight of ${\sf P}_{\ga_i}/{\sf B}$ at ${\sf B}$ is the simple root $\ga_i$.
By Theorem~\ref{thm:GZ}, the ${\sf T}$-weights of ${\mathcal B}{\mathcal E}^{Y_0,Q}$ at the fixed point $[u,s_{\beta_{N}},s_{\beta_{N-1}},\ldots,s_{\gb_1}]$ is the following multiset-union
\begin{equation*} u\cdot(-\text{inv}(w))\cup \{us_{\beta_{N}}\cdot( -\ga_{j_{N}})\}\cup  \{us_{\beta_{N}}s_{\beta_{N-1}}\cdot (-\ga_{j_{N-1}})\}\cup \cdots  \{us_{\beta_{N}}\cdots s_{\beta_1}\cdot (-\ga_{j_{1}})\}
\end{equation*}
By Theorem~\ref{thm:main}, the ${\sf T}$-weights for the tangent spaces of ${\mathcal B}{\mathcal E}^{Y_0,Q}$ are the same as those for $\bem$.
\end{proof}

\begin{corollary}\label{cor:vertexpartial} The point $\Phi(p_{(u{\sf B},J)})$ 
is a vertex of ${\mathcal P}_{Y_{+\cdots+-\cdots -},Q}$ if and only if $\Phi(p_{({\sf B},J)})$ is a vertex of ${\mathcal P}_{Y_{+\cdots+-\cdots -},Q}$.
\end{corollary}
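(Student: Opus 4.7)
The plan is to combine Theorem~\ref{prop:tweights} with the standard vertex criterion for moment polytopes: by the Guillemin statement cited just before Theorem~\ref{prop:tweights}, for any ${\sf S}$-fixed point $p \in \bem$ the nonnegative cone spanned by the ${\sf T}$-weights on $T_p(\bem)$ agrees with the cone on the polytope edges emanating from $\Phi(p)$. Since ${\sf T}={\sf S}$ in the $({\sf GL}_{p+q},{\sf GL}_p\times {\sf GL}_q)$ case, this cone lives in $\mathfrak{s}^*$, and $\Phi(p)$ is a vertex of ${\mathcal P}_{Y_{++\cdots+--\cdots-},Q}$ if and only if that cone is strongly convex, i.e., contained in some open half-space: equivalently, iff there exists $\xi \in \mathfrak{s}$ with $\langle \xi,\alpha\rangle > 0$ for every weight $\alpha$ in the cone.

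First I would invoke Theorem~\ref{prop:tweights} to observe that the weight multiset at $p_{(u{\sf B},J)}$ is exactly the $u$-translate of the weight multiset $W_J$ at the specialization $p_{({\sf B},J)}$ (set $u=\mathrm{id}$ in the formula of the theorem). The action of $u \in {\sf W}$ on $\mathfrak{t}^*=\mathfrak{s}^*$ is a linear isomorphism, and a linear isomorphism carries strongly convex cones to strongly convex cones: if $\xi \in \mathfrak{s}$ satisfies $\langle \xi,\alpha\rangle > 0$ for every $\alpha \in W_J$, then $u\xi$ plays the same role for every element of $u \cdot W_J$; conversely, $u^{-1}$ transports any separating functional for $u\cdot W_J$ back to one for $W_J$.

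Chaining these two inputs completes the proof: $\Phi(p_{(u{\sf B},J)})$ is a vertex iff the cone on $u \cdot W_J$ is strongly convex iff the cone on $W_J$ is strongly convex iff $\Phi(p_{({\sf B},J)})$ is a vertex. I do not anticipate a genuine obstacle here; the only subtle point is to explicitly invoke the open half-space form of Guillemin's vertex criterion rather than the edge-cone form alone, so that the Weyl-group translation is transparently seen to preserve the property in question.
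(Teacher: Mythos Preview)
Your proof is correct and follows essentially the same route as the paper: both invoke Theorem~\ref{prop:tweights} to see that the ${\sf T}$-weight multiset at $p_{(u{\sf B},J)}$ is the $u$-translate of that at $p_{({\sf B},J)}$, and then observe that the vertex criterion (you phrase it as strong convexity of the weight cone, the paper as the cone not containing a line) is preserved under the linear action of $u$. The only cosmetic difference is your explicit half-space/separating-functional formulation versus the paper's ``no line in the cone'' formulation, but these are equivalent and the argument is the same.
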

\begin{proof}
$\Phi(p_{(w{\sf B},J)})$ is a vertex whenever there is not a line in the cone spanned by the ${\sf T}$-weights of the tangent space 
$T_{p_{(w{\sf B},J)}}(\mathcal{BEM}^{Y_{+\cdots+-\cdots -},Q})$. 
By Theorem~\ref{prop:tweights}, 
\[\text{${\sf T}$-weights of $T_{p_{(u{\sf B},J)}}(\mathcal{BEM}^{Y_{+\cdots+-\cdots -},Q})$}
=
u\cdot (\text{${\sf T}$-weights of $T_{p_{({\sf B},J)}}(\mathcal{BEM}^{Y_{+\cdots+-\cdots -},Q})$}).
\]
The claim follows since a
cone contains a line if and only any reflection contains a line.
\end{proof}

\begin{example} Consider the BEM polytope ${\mathcal P}_{Y_0,Q}$ of Example~\ref{ex:runningpoly} and Figure~\ref{fig:1}. 
The vertices of ${\mathcal P}_{Y_0,Q}$ adjacent to $\Phi(p_{({\sf B},(3,2))})$ are 
\[\Phi(p_{({\sf B},(-,2))}), \ \ \Phi(p_{([1,2,4,3]{\sf B},(3,2))}), \ \ \  \text{and \ \ \ $\Phi(p_{([2,1,3,4]{\sf B},(3,2))})$.}\]
The cone spanned by the edges of ${\mathcal P}_{Y_0,Q}$ incident to $\Phi(p_{({\sf B},(3,2))})$ is
\begin{align*}
{\sf pos}&\{\Phi(p_{({\sf B},(-,2))})-\Phi(p_{({\sf B},(3,2))}),\Phi(p_{(s_3{\sf B},(3,2))})-\Phi(p_{({\sf B},(3,2))}),\Phi(p_{([s_1{\sf B},(3,2))})-\Phi(p_{({\sf B},(3,2))})\}\\
&={\sf pos}\{(0,0,-1,1),(0,1,0,-1),(-1,1,0,0)\}.
\end{align*}
Let us compute the ${\sf T}$-weights for the tangent space of $\bem$ at $p_{({\sf B},(3,2))}$. We have that $w=[2,1,4,3]=s_1s_3$ so
$$
\text{inv}(w)=\{\ga_1,s_1(\ga_3)\}=\{\ga_1,\ga_3\}=\{(1,-1,0,0),(0,0,1,-1)\}.
$$
Since $J=(3,2)$, then by Theorem~\ref{prop:tweights} the $\sf T$-weights are
\begin{align*}\{-\ga_1,-\ga_3\}\cup\{s_{2}(-\alpha_{2}),s_{2}s_{3}(-\alpha_3)\}&=\{-\ga_1,-\ga_3,\ga_2,\ga_2+\ga_3\}\\
&=\{(-1,1,0,0),(0,0,-1,1),(0,1,-1,0),(0,1,0,-1)\}.
\end{align*}
The cone spanned by the $\sf T$-weights coincides with the cone spanned by the edges incident to $\Phi(p_{({\sf B},(3,2)})$.
Since this cone does not contain a line it follows that $\Phi(p_{({\sf B},(3,2)})$ is a vertex of ${\mathcal P}_{Y_0,Q}$.

Now consider the $\sf T$-fixed point $p_{({\sf B},(3,-))}$. 
The $\sf T$-weights for the tangent space of $\bem$ at  $p_{({\sf B},(3,-))}$ are
\begin{align*}\{-\ga_1,-\ga_3\}\cup\{s_{-}(-\alpha_{2}),s_{-}s_{3}(-\alpha_3)\}&=\{-\ga_1,-\ga_3,-\ga_2,\ga_3\}
\\
&=\{(-1,1,0,0),(0,0,-1,1,),(0,-1,1,0),(0,0,1,-1)\}.
\end{align*}
By Theorem~\ref{prop:tweights} the cone spanned by these vectors is the cone spanned by the edges incident to $\phi(p_{({\sf B},(3,-))})$. Since this cone contains the line spanned by $\ga_3$ then this point is not a vertex of ${\mathcal P}_{Y_0,Q}$.\qed
\end{example}

Although we have not done so here, it should be possible 
to give a combinatorial description of the vertices of ${\mathcal P}_{Y_0,Q}$.
Doing so is equivalent to classifying the $\sf T$-fixed points for which the cone spanned by the $\sf T$-weights does not contain a line. 

We conclude this paper with:

\begin{theorem}[Dimension of ${\mathcal P}_{Y_0,Q}$]\label{thm:dimaug2}
For $({\sf G},{\sf K})=({\sf GL}_{p+q},{\sf GL}_p\times {\sf GL}_q)$,
$$
\dim({\mathcal P}_{Y_0,Q})=\begin{cases} p+q-1,\ \text{ if }\ p \text{ is in } Q, \text{ and}\\
 p+q-2,\ \text{ if }\ p \text{ is not in } Q.\end{cases}
$$
\end{theorem}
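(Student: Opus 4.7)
The plan is to reduce the dimension computation to a rank computation of torus weights at a judiciously chosen ${\sf S}$-fixed point, then apply Theorem~\ref{prop:tweights}. The underlying principle is a standard consequence of the equivariant Darboux theorem: for any Hamiltonian torus action on a smooth variety $X$ and any torus-fixed point $p \in X$, the moment polytope satisfies $\dim \Phi(X) = \dim_{\R}{\rm span}(\text{torus weights on } T_p X)$, since the moment map near $\Phi(p)$ is locally modeled on the weight decomposition of $T_p X$ (see \cite{Guillemin}).

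First, by Theorem~\ref{prop:theone}, I would reduce to the case $Y_0 = Y_{++\cdots+--\cdots-}$; the passage to other closed orbits is by an isometry and does not affect dimension. Next, I would apply Theorem~\ref{prop:tweights} to the ${\sf S}$-fixed point $p_{({\sf B},J_0)}$ where $u = e$ and $J_0 = (-,-,\ldots,-)$ is the empty subword (so each $s_{\beta_i}$ is the identity). The weights at this point are
\[
-{\rm inv}(w)\ \cup\ \{-\alpha_{j_k} : 1 \leq k \leq N\}, \quad \text{where } w = [p,p-1,\ldots,1,n,n-1,\ldots,p+1].
\]

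A brief check, using that $w$ is the longest element of $W_K = {\mathfrak S}_p \times {\mathfrak S}_q$ acting on $\{1,\ldots,p\}$ and $\{p+1,\ldots,n\}$ separately, yields ${\rm inv}(w) = \{e_i - e_j : 1 \le i < j \le p\} \cup \{e_i - e_j : p+1 \le i < j \le n\}$. Hence the linear span $V_1$ of $-{\rm inv}(w)$ is the codimension-two subspace cut out by $\sum_{i=1}^p x_i = 0 = \sum_{i=p+1}^n x_i$, of dimension $(p-1) + (q-1) = p+q-2$.

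The concluding step is to observe that $-\alpha_i = e_{i+1}-e_i$ lies in $V_1$ exactly when $i \neq p$, while $-\alpha_p = e_{p+1}-e_p$ is linearly independent of $V_1$. Therefore the span of all weights equals $V_1$ when $p \notin Q$, and equals $V_1 \oplus \R\alpha_p$ when $p \in Q$, with respective dimensions $p+q-2$ and $p+q-1$. The principal step to justify is the opening Hamiltonian fact, but this is immediate from the local linearization of $\Phi$ near $p$; everything else is bookkeeping with roots.
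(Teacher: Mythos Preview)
Your proof is correct and takes a genuinely different route from the paper's. The paper argues globally: it identifies the stabilizer of the ${\sf T}$-action on $\bem$ by exhibiting, for each non-identity torus element outside a candidate stabilizer, an explicit configuration of subspaces in $\bem$ that it moves. This is done through a case analysis (several subcases depending on the position of the first nontrivial coordinate of ${\sf t}$ relative to $p$), and the effective torus is shown to be ${\sf T}_{n-1}$ or ${\sf T}_{n-2}$ according to whether $p$ appears in $Q$.

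Your argument is local: you compute the rank of the weight span at the single fixed point $p_{({\sf B},J_0)}$ with $J_0$ the empty subword, using Theorem~\ref{prop:tweights} directly. This is shorter and makes the dichotomy transparent, since $-{\rm inv}(w)$ already spans the codimension-two subspace $\{\sum_{i\le p}x_i=0=\sum_{i>p}x_i\}$ and the only simple root that escapes it is $\alpha_p$. The paper's approach, on the other hand, is self-contained (it does not invoke Theorem~\ref{prop:tweights} or the Graham--Zierau machinery behind it) and makes the geometric content of the stabilizer visible. Two small points worth making explicit in your write-up: first, that ${\sf S}={\sf T}$ in the $({\sf GL}_{p+q},{\sf GL}_p\times{\sf GL}_q)$ case, so Theorem~\ref{prop:tweights} indeed gives the relevant weights; second, that $\bem$ is connected (as an iterated ${\mathbb P}^1$-bundle over the connected $Y_0$), which is what guarantees the weight-rank at one fixed point equals the polytope dimension.
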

\begin{proof}
A $T$-action on a space $X$ is \emph{effective} if each element of $T$, other than the identity, moves at least one point of $X$.
In the proof of \cite[Corollary 27.2]{CannasdaSilva} it is shown that for an effective Hamiltonian $T$-action the dimension of the corresponding moment polytope equals the dimension of the torus.
If the $T$-action is not effective it is known that it can be reduced it to an effective action with the same moment polytope. The \emph{stabilizer} of the $T$-action is the normal subgroup
$$
S_T(X):=\{t\in T \mid t\cdot x=x \text{ for all } x\in X\}.
$$
The $T$-action on $X$ reduces to the effective action of $T/S_T(X)$ given by $tS_T(X)\cdot x:=t\cdot x$.
To prove the Theorem we will consider the cases in which $p$ is in $Q$ and when it isn't separately.
In each case we will explicitly compute the stabilizer of the $\sf T$-action on $\bem$.
First, note that 
	$
	{\sf T}_1=\{{\sf t}\in {\sf T}\mid t_1=\ldots =t_{n}\}
	$
acts trivially on $\bem$ and therefore it is a subgroup of $S_{\sf T}(\bem)$.

In view of Theorem~\ref{prop:theone}, we assume without loss of generality, that $Y_0=Y_{+\cdots+-\cdots -}$, i.e.
	\begin{equation*}\label{eq:ysippmm}
	Y_0=\{(F_1,F_2,\ldots,F_{n-1})\in{\sf G}/{\sf B} \mid F_p=E_p\}.
	\end{equation*}
Consider the projection $\varphi_0:\bem\to Y_0$ that maps a point in the configuration space to
the leftmost flag (corresponding to the leftmost border) in the diagram.
Since the projection is $\sf T$-equivariant, $S_{\sf T}(\bem)$ is a subgroup of $S_{\sf T}(Y_0)$.
Let us first describe $S_{\sf T}(Y_0)$.
Note that 
	$$
	{\sf T}_2=\{{\sf t}\in {\sf T}\mid t_1=\ldots =t_p,\ t_{p+1}=\ldots=t_{n}\}
	$$
acts trivially on $Y_0$. 
Furthermore, since $Y_0$ is isomorphic to ${\sf GL}_p/{\sf B}\times {\sf GL}_q/{\sf B}$ then $S_{\sf T}(Y_0)$ is isomorphic to $S_{\sf T}({\sf GL}_p/{\sf B})\times S_{\sf T}({\sf GL}_q/{\sf B})$. 
It follows that $S_{\sf T}(Y_0)={\sf T}_2$ and therefore $S_{\sf T}(\bem)$ is either equal to ${\sf T}_1$ or ${\sf T}_2$.

First suppose that $p$ is in $Q$ and let $k$ be such that $j_k=p$, where $Q=(j_1,\ldots,j_N)$. 
It is straightforward from Section~\ref{sec:A-BEMs} that if we set 
	$
	V_k={\rm span}\{\vec e_1,\ldots,\vec e_{p-1},\vec e_{p}+\vec e_{p+1}\},
	$
then the following holds:
	$$
	(E_1,E_2,\ldots,E_{n-1},E_{j_1},E_{j_2},\ldots E_{j_{k-1}},V_k,E_{j_{k+1}},\ldots, E_{j_N})\in \bem.
	$$
Furthermore, for ${\sf t}\in {\sf T}_2$ 
\begin{align*}
{\sf t}\cdot V_k
&={\rm span}\{\vec e_1,\vec e_2,\ldots,\vec e_{p-1},\vec e_{p}+t_{n}\vec e_{p+1}\} \neq V_{k},
\end{align*}
so $ {\sf T}_2$ does not act trivially on $\bem$ and $S_{\sf T}(\bem)={\sf T}_1$.
From this and the first paragraph it follows that $\dim({\mathcal P}_{Y_0,Q})=p+q-1$. 

Now suppose that $p$ is not in $Q$.
This implies that every vector space appearing in 
	$$
	(F_1,F_2,\ldots,F_{n-1},V_1,V_2,\ldots V_N)\in \bem
	$$ 
must either be a subspace of $E_p$ or contain $E_p$.
So ${\sf T}_2$ acts trivially on $\bem$ and 
$S_{\sf T}(\bem)={\sf T}_2$.
We conclude that $\dim({\mathcal P}_{Y_0,Q})=p+q-2$.
\end{proof}

\begin{example}
The data of Table~\ref{table:someinfo} is consistent with
Theorem~\ref{thm:dimaug2}. Furthermore,
note that the dimension characterization 
only depends on $p$ 
and not $q$. Indeed, if $p=2$ and $q=3$, one can check ${\mathcal P}_{Y_{++---},(3)}$ has 
dimension $3$, also in agreement with the theorem.\qed
\end{example}

\section*{Acknowledgements}
We thank D.~Barbasch, B.~Elek, S.~Evens, W.~Graham, A.~Knutson, E.~Lerman, L.~Li, J.~Pascaleff, and A.~Woo, and  for helpful conversations. We thank S.~Evens for clarifying and correcting our understanding
of the hypotheses of \cite[Proposition~6.4]{Barbasch-Evens-94}. We also thank anonymous referees for their suggestions. LE was supported by an AMS-Simons travel grant, NSF grant DMS 1855598 and NSF CAREER grant DMS 2142656. AY was 
supported by NSF grant  DMS 1500691, an NSF RTG in combinatorics DMS 1937241, and 
Simons Collaboration Grant 582242.

\bibliography{KBSbiblio}
\bibliographystyle{plain}

\end{document}